\DeclareSymbolFont{cyss}{OT2}{wncyss}{m}{n}
\DeclareMathSymbol{\sh}{\mathbin}{cyss}{`x}
\newtheorem{theorem}{Theorem}[section]
\newtheorem{lemma}[theorem]{Lemma}
\newtheorem{prop}[theorem]{Proposition}
\newtheorem{corollary}[theorem]{Corollary}
\theoremstyle{definition}
\newtheorem{remark}[theorem]{Remark}
\DeclareSymbolFont{cyss}{OT2}{wncyss}{m}{n}
\DeclareMathSymbol{\sh}{\mathbin}{cyss}{`x}
\numberwithin{equation}{section}
\begin{document}

\title{On a duality formula for certain sums of values of poly-Bernoulli polynomials and its application}

\author{Masanobu Kaneko, Fumi Sakurai, and Hirofumi Tsumura}

\date{\today}

\maketitle

\begin{abstract}
  We prove a duality formula for certain sums of values of poly-Bernoulli polynomials which generalizes dualities for poly-Bernoulli numbers. 
  We first compute two types of generating functions for these sums, from which  the duality formula is apparent. 
  Secondly we give an analytic proof of the duality from the viewpoint of our previous study of zeta-functions
  of Arakawa-Kaneko type. As an application, we give a formula that relates poly-Bernoulli numbers to the Genocchi numbers.
\end{abstract}

\section{Introduction} \label{sec-1}

Two types of poly-Bernoulli numbers $\{B_n^{(k)}\}$ and $\{C_n^{(k)}\}$ are defined by 
the generating series
\begin{equation}
\begin{split}
&\frac{{\rm Li}_{k}(1-e^{-t})}{1-e^{-t}}=\sum_{n=0}^\infty B_n^{(k)}\frac{t^n}{n!}, \\
&\frac{{\rm Li}_{k}(1-e^{-t})}{e^t-1}=\sum_{n=0}^\infty C_n^{(k)}\frac{t^n}{n!} 
\end{split}
\label{e-1-1}
\end{equation}
for $k\in \mathbb{Z}$, where ${\rm Li}_{k}(z)$ is the polylogarithm function given by
\begin{equation}
{\rm Li}_{k}(z)=\sum_{m=1}^\infty \frac{z^m}{m^k}\quad (|z|<1) \label{e-1-3}
\end{equation}
(see Kaneko \cite{Kaneko1997} and Arakawa-Kaneko \cite{AK1999}, also Arakawa-Ibukiyama-Kaneko \cite{AIK2014}). 
Noting ${\rm Li}_1(z)=-\log(1-z)$, we see that $C_n^{(1)}$ coincides with the ordinary Bernoulli number $B_n$ defined by
\begin{equation*}
\frac{t}{e^t-1}=\sum_{n=0}^\infty \,B_n\frac{t^n}{n!}, 
\end{equation*}
and that $B_n^{(1)}=B_n$ for $n \in \mathbb{Z}_{\geq 0}$ with $n\neq 1$.

These numbers have been actively investigated and many interesting properties and formulas for them 
have been discovered (see, for example, \cite{BH2015,Brew2008,CGS2014,HM2007-1,HM2007-2,Kaneko-Mem}). 
Of them we highlight the following duality formulas obtained by the first-named author:
\begin{align}
& B_m^{(-l)}=B_{l}^{(-m)},\label{e-1-5}\\
& C_m^{(-l-1)}=C_{l}^{(-m-1)} \label{e-1-5-2}
\end{align}
for any $l,m\in \mathbb{Z}_{\geq 0}$, which can be shown by considering their generating functions 
in two variables (see \cite[Theorem 2]{Kaneko1997} and \cite[\S\,2]{Kaneko-Mem}).

The poly-Bernoulli polynomials are defined by
\begin{align}
&e^{-xt}\frac{{\rm Li}_{k}(1-e^{-t})}{1-e^{-t}}=\sum_{n=0}^\infty B_n^{(k)}(x)\frac{t^n}{n!}  \label{e-1-6}
\end{align}
(see Coppo-Candelpergher \cite{CC2010}). It can be easily checked that
\begin{align*}
& B_n^{(k)}(0)=B_n^{(k)},\quad B_n^{(k)}(1)=C_n^{(k)}, \\
& B_n^{(k)}(x)=\sum_{j=0}^{n}(-1)^{n-j}\binom{n}{j}B_j^{(k)}x^{n-j}. 
\end{align*}

The main purpose of this paper is to generalize the duality formulas \eqref{e-1-5} and \eqref{e-1-5-2} as follows:
the identity
\begin{equation}
\sum_{j=0}^{n} {n \brack j} B_{m}^{(-l-j)}(n) = \sum_{j=0}^{n} {n \brack j} B_{l}^{(-m-j)}(n) \label{e-1-9}
\end{equation}
holds for any $l , m, n\in \mathbb{Z}_{\geq 0}$ (see Corollary \ref{Cor-1}), where 
$\{{n \brack j} \mid n,j\in \mathbb{Z}_{\geq 0}\}$ are the Stirling numbers of the first kind (for the definition, see \eqref{Stir}). 
In particular, we easily see that \eqref{e-1-9} for the cases of $n=0$ and $n=1$ coincide with \eqref{e-1-5} and \eqref{e-1-5-2}, respectively. 
Hence \eqref{e-1-9} can be regarded as a ``one-parameter'' generalization of the duality formula for poly-Bernoulli numbers.
It is an interesting question whether this generalization also has some nice combinatorial interpretations like
those described in \cite{BH2015,Brew2008,CGS2014}.

In Section \ref{sec-2}, we give an elementary proof of \eqref{e-1-9}. In fact, denoting the left-hand side of \eqref{e-1-9} 
by $\mathscr{B}_{m}^{(-l)}(n)$, we calculate two types of generating functions of 
$\{\mathscr{B}_{m}^{(-l)}(n)\}_{l,m\geq 0}$ in two variables (see Theorem \ref{Th-main-1}), 
which turn out to be symmetric and hence  \eqref{e-1-9} follows. In Section \ref{sec-3}, 
we give an analytic proof \eqref{e-1-9} from the viewpoint of our previous study of zeta-functions of Arakawa-Kaneko type. 
The method is similar to that used by the first-named and the third-named authors in \cite{Kaneko-Tsumura2015}. In 
the final Section \ref{sec-4}, as an application of Theorem \ref{Th-main-1}, we prove the relation
$$
\sum_{l=0}^{n}(-1)^{l}C_{n-l}^{(-l-1)} = -G_{n+2}\quad (n\in \mathbb{Z}_{\geq 0})
$$
(see Theorem \ref{Th-main-2}), where $G_{n} = (2-2^{n+1})B_{n}$ $(n\in \mathbb{Z}_{\geq 0})$ is the Genocchi number (see, for example, Lucas \cite[P.\,250]{Lucas}, also Stanley \cite[Exercise 5.8]{Stanley}). This can be regarded as a ``$C$-version'' of the known formula for $B_m^{(-l)}$ (see \cite[Proposition]{AK-R1999}):
$$
\sum_{l=0}^{n}(-1)^{l}B_{n-l}^{(-l)} = 0 \quad (n\in \mathbb{Z}_{\geq 1}).
$$
\ 

\section{A generalization of the duality formula}\label{sec-2}

Let ${n \brack m}$ and ${n \brace m}$ $(n,m\in \mathbb{Z}_{\geq 0})$ be the Stirling numbers 
of the first and the second kind determined respectively by the following recursion relations:
\begin{equation}
\begin{split}
& {0 \brack 0}=1,\quad {n \brack 0}={0 \brack m}=0\ \ (m,n\neq 0),\\
& {n+1 \brack m}={n \brack m-1}+n{n \brack m}\ \ (n\geq 0,\ m\geq 1),
\end{split}
\label{Stir}
\end{equation}
and 
\begin{align*}
& {0 \brace 0}=1,\quad {n \brace 0}={0 \brace m}=0\ \ (m,n\neq 0),\\
& {n+1 \brace m}={n \brace m-1}+m{n \brace m}\ \ (n\geq 0,\ m\geq 1),
\end{align*}
(see for example, \cite[Definitions 2.2 and 2.4]{AIK2014}). 

As mentioned in Section \ref{sec-1}, we let 
\begin{equation}
\mathscr{B}_{m}^{(-l)}(n):=\sum_{j=0}^{n} {n \brack j} B_{m}^{(-l-j)}(n) \label{e-2-1}
\end{equation}
for $ l , m, n\in \mathbb{Z}_{\geq 0}$. Note that 
$$\mathscr{B}_{m}^{(-l)}(0)={B}_{m}^{(-l)},\quad \mathscr{B}_{m}^{(-l)}(1)={C}_{m}^{(-l-1)}.$$
The first main result of this paper is the following theorem. 

\begin{theorem}\label{Th-main-1} For $n\in \mathbb{Z}_{\geq 0}$, we have
\begin{align}
& \sum_{l=0}^{\infty}\sum_{m=0}^{\infty}\mathscr{B}_{m}^{(-l)}(n)\frac{x^{l}}{l!}\frac{y^{m}}{m!} 
=  \frac{n!\,e^{x+y}}{(e^{x}+e^{y}-e^{x+y})^{n+1}} \label{e-2-2}\\
\intertext{and}
& \sum_{l=0}^{\infty}\sum_{m=0}^{\infty}\mathscr{B}_{m}^{(-l)}(n)x^{l}y^{m} 
= \sum_{j=0}^{\infty} j!\ (j+n)!\ Q_{j}(x)Q_{j}(y), \label{e-2-3}
\end{align}
where 
$$Q_{j}(X) = \frac{X^{j}}{(1-X)(1-2X) \cdots (1-(j+1)X)}\quad (j\in \mathbb{Z}_{\geq 0}).$$
\end{theorem}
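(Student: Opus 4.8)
The plan is to derive both generating function identities by manipulating the defining series, starting from the two-variable generating function for the poly-Bernoulli polynomials themselves. First I would establish the key building block: the generating function
\[
\sum_{l=0}^{\infty} B_m^{(-l)}(x)\frac{u^l}{l!}
\]
in terms of known series. Recall that for negative upper index the polylogarithm $\mathrm{Li}_{-l}(z)$ is a rational function, and summing $\sum_{l\geq 0}\mathrm{Li}_{-l}(z)u^l/l!$ produces a closed form. Combined with the definition \eqref{e-1-6}, this should yield an explicit two-variable generating function
\[
\sum_{l=0}^{\infty}\sum_{m=0}^{\infty} B_m^{(-l)}(x)\frac{u^l}{l!}\frac{t^m}{m!} = e^{-xt}\cdot\frac{(\text{rational expression in } e^{u},e^{-t})}{1-e^{-t}}.
\]

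Next I would insert this into the definition \eqref{e-2-1} of $\mathscr B_m^{(-l)}(n)$, handling the two pieces — the shift $-l-j$ in the upper index and the evaluation at the argument $n$ — separately. The Stirling-number sum $\sum_{j=0}^n {n\brack j}$ acting on the index shift $j$ is precisely what the generating-function identity $\sum_j {n\brack j}v^j = v(v-1)\cdots(v-n+1)$ (the rising/falling factorial) is designed to collapse; this is where the factor $n!$ and the power $n+1$ in \eqref{e-2-2} will originate, since differentiating or multiplying the geometric-type series $1/(1-w)$ repeatedly produces $n!/(1-w)^{n+1}$. Setting the polynomial argument equal to $n$ and the auxiliary variable appropriately should, after the dust settles, collapse the whole expression to $n!\,e^{x+y}/(e^x+e^y-e^{x+y})^{n+1}$. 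The symmetry in $x\leftrightarrow y$ is then manifest, which is the whole point.

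For the second identity \eqref{e-2-3}, I would not re-expand from scratch but rather re-sum the same double series with the ordinary (rather than exponential) generating variables $x^l y^m$. Here the rational functions $Q_j$ suggest that the natural route is to expand $\mathscr B_m^{(-l)}(n)$, or equivalently the relevant polylogarithm data, in terms of the partial-fraction-type building blocks $X^j/\prod_{i=1}^{j+1}(1-iX)$. The appearance of the product $j!\,(j+n)!$ hints that one should recognize a single summation index $j$ coupling the $x$- and $y$-sides symmetrically — likely by writing the closed form from \eqref{e-2-2} (or an intermediate form) and performing a geometric-series expansion in a variable that separates into a product $Q_j(x)Q_j(y)$.

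The hard part will be the bookkeeping in the Stirling-number summation and in verifying that the two closed forms genuinely match — in particular, checking that the change from exponential to ordinary generating functions in passing between \eqref{e-2-2} and \eqref{e-2-3} is consistent, and that the $Q_j$ expansion reproduces exactly the factors $j!\,(j+n)!$. I expect the cleanest path is to prove \eqref{e-2-2} first and then derive \eqref{e-2-3} from it (or prove both independently and cross-check at $n=0,1$ against \eqref{e-1-5} and \eqref{e-1-5-2}), since the rational closed form in \eqref{e-2-2} is more directly accessible from the polylogarithm generating series, whereas \eqref{e-2-3} requires the additional combinatorial identity expressing a geometric expansion in terms of the $Q_j$.
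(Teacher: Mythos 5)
Your plan is sound, and for \eqref{e-2-2} it takes a genuinely different route from the paper's Section~\ref{sec-2} proof. The paper first proves a closed Stirling-number formula for $B_m^{(k)}(n)$ (Proposition~\ref{prop-2-5}, which rests on Takeda's Lemma~\ref{lem-Takeda}), substitutes it into \eqref{e-2-1}, and collapses the resulting quadruple sum using the rising-factorial identity together with the orthogonality relation $\sum_{i}(-1)^{i}{n \brack i}{i \brace g}=(-1)^{n}\delta_{n,g}$. Your route avoids Takeda's lemma and the orthogonality entirely: from $\sum_{l\ge 0}\mathrm{Li}_{-l-j}(z)\,u^{l}/l!=\sum_{p\ge 1}p^{j}z^{p}e^{pu}$ and $\sum_{j=0}^{n}{n \brack j}p^{j}=p(p+1)\cdots(p+n-1)$ one gets
\[
\sum_{l,m\ge 0}\mathscr{B}_m^{(-l)}(n)\frac{x^l}{l!}\frac{t^m}{m!}
=\frac{n!\,e^{-nt}e^{x}}{1-e^{-t}}\sum_{p\ge 1}\binom{p+n-1}{n}\bigl[(1-e^{-t})e^{x}\bigr]^{p},
\]
and the binomial series immediately yields \eqref{e-2-2}, the factor $e^{-nt}$ coming from evaluation at the argument $n$ being exactly what symmetrizes the closed form. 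This is in substance the computation behind the paper's analytic proof in Section~\ref{sec-3} (Lemma~\ref{L-Gk} and \eqref{eq-3-2}), carried out at the level of power series without the contour-integral and zeta-function apparatus; your version buys directness, while the paper's Section~\ref{sec-2} route yields the explicit formula \eqref{e-2-7} as a by-product. For \eqref{e-2-3} your plan coincides with the paper's: write $e^{x}+e^{y}-e^{x+y}=1-(e^{x}-1)(e^{y}-1)$, expand by the binomial series, extract coefficients to obtain the finite expression \eqref{e-2-9}, and resum using the ordinary generating function $Q_{j}(t)=\sum_{n\ge j+1}{n \brace j+1}t^{n}$; that coefficient extraction is precisely the EGF-to-OGF consistency step you flagged as the hard part. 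One correction: with the paper's unsigned convention \eqref{Stir}, the first-kind identity you need is the \emph{rising} factorial $\sum_{j}{n \brack j}v^{j}=v(v+1)\cdots(v+n-1)$; the falling-factorial formula you wrote holds for the signed Stirling numbers, and using it here would produce an asymmetric (wrong) closed form.
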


From \eqref{e-2-2} or \eqref{e-2-3}, we immediately obtain the following result which contains \eqref{e-1-5} 
and \eqref{e-1-5-2} as the special cases $n=0,1$.

\begin{corollary}\label{Cor-1}\ \ For $ l , m, n\in \mathbb{Z}_{\geq 0}$, it holds  
$\mathscr{B}_{m}^{(-l)}(n)=\mathscr{B}_{l}^{(-m)}(n)$, namely
\begin{equation}
\sum_{j=0}^{n} {n \brack j} B_{m}^{(-l-j)}(n) = \sum_{j=0}^{n} {n \brack j} B_{l}^{(-m-j)}(n). \label{e-2-4}
\end{equation}
\end{corollary}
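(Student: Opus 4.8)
The plan is to compute the exponential generating function \eqref{e-2-2} first by a direct manipulation of \eqref{e-1-6}, then to read off from it a closed symmetric expression for $\mathscr{B}_{m}^{(-l)}(n)$ in terms of Stirling numbers of the second kind, and finally to substitute that expression into the left-hand side of \eqref{e-2-3}. Since all the series involved converge near the origin (equivalently, may be read as formal power series), the rearrangements below are legitimate.

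For \eqref{e-2-2}, I begin with the defining series \eqref{e-1-6} with its argument specialized to $n$ and its series variable renamed $y$, namely
\begin{equation*}
\sum_{m\geq 0}B_{m}^{(-l-j)}(n)\,\frac{y^{m}}{m!}=e^{-ny}\,\frac{{\rm Li}_{-l-j}(1-e^{-y})}{1-e^{-y}}.
\end{equation*}
Inserting the definition \eqref{e-2-1} of $\mathscr{B}_{m}^{(-l)}(n)$ and summing over $l$ against $x^{l}/l!$, the first key step is the index-shift identity
\begin{equation*}
\sum_{l=0}^{\infty}{\rm Li}_{-(l+j)}(z)\,\frac{x^{l}}{l!}={\rm Li}_{-j}(ze^{x}),
\end{equation*}
immediate from ${\rm Li}_{-k}(z)=\sum_{p\geq 1}p^{k}z^{p}$ upon interchanging the summations and summing the exponential in $p$. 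With $z=1-e^{-y}$ and $w=ze^{x}$ this leaves the inner sum $\sum_{j=0}^{n}{n \brack j}{\rm Li}_{-j}(w)$, which I evaluate using the rising-factorial identity $\sum_{j=0}^{n}{n \brack j}p^{j}=p(p+1)\cdots(p+n-1)$ for the (unsigned) Stirling numbers of the first kind together with $\sum_{p\geq 1}p(p+1)\cdots(p+n-1)\,w^{p}=n!\,w/(1-w)^{n+1}$, giving
\begin{equation*}
\sum_{j=0}^{n}{n \brack j}{\rm Li}_{-j}(w)=\frac{n!\,w}{(1-w)^{n+1}}.
\end{equation*}
Since $w/z=e^{x}$ and $1-w=e^{-y}(e^{x}+e^{y}-e^{x+y})$, a short simplification collapses the whole expression to the right-hand side of \eqref{e-2-2}.

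To obtain \eqref{e-2-3} I pass through an explicit formula. Using the factorization $e^{x}+e^{y}-e^{x+y}=1-(e^{x}-1)(e^{y}-1)$ and writing $u=e^{x}-1$, $v=e^{y}-1$, the right-hand side of \eqref{e-2-2} becomes $e^{x+y}\sum_{j\geq 0}\tfrac{(j+n)!}{j!}(uv)^{j}$ after expanding $(1-uv)^{-(n+1)}$ as a binomial series. Comparing the coefficients of $x^{l}y^{m}/(l!\,m!)$ on both sides of \eqref{e-2-2}, with the help of the shifted exponential generating function
\begin{equation*}
\sum_{l\geq 0}{l+1 \brace j+1}\frac{x^{l}}{l!}=\frac{e^{x}(e^{x}-1)^{j}}{j!},
\end{equation*}
yields the symmetric closed form
\begin{equation*}
\mathscr{B}_{m}^{(-l)}(n)=\sum_{j\geq 0}(j+n)!\,j!\,{l+1 \brace j+1}{m+1 \brace j+1}.
\end{equation*}
Substituting this into $\sum_{l,m}\mathscr{B}_{m}^{(-l)}(n)x^{l}y^{m}$ and applying the ordinary generating function $\sum_{l\geq 0}{l+1 \brace j+1}x^{l}=Q_{j}(x)$, whose denominator is exactly $(1-x)(1-2x)\cdots(1-(j+1)x)$, produces \eqref{e-2-3}.

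I expect the main obstacle to be the Stirling-number bookkeeping: proving the two shifted generating functions of $\{{l+1 \brace j+1}\}_{l}$ (the exponential one, say by differentiating $\sum_{p}{p \brace j+1}x^{p}/p!=(e^{x}-1)^{j+1}/(j+1)!$, and the ordinary one from the classical $\sum_{p}{p \brace r}x^{p}=x^{r}/\prod_{i=1}^{r}(1-ix)$), and justifying the interchanges of the formally infinite sums over $l,m,j$ and $p$. The conceptual heart is the first-kind identity that converts $\sum_{j}{n \brack j}{\rm Li}_{-j}(w)$ into the rational function $n!\,w/(1-w)^{n+1}$, since this is where the parameter $n$ enters. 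Once the symmetric closed form for $\mathscr{B}_{m}^{(-l)}(n)$ is established, both \eqref{e-2-2} and \eqref{e-2-3}, as well as the symmetry of Corollary \ref{Cor-1}, are immediate.
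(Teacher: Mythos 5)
Your proof is correct, and for the key identity \eqref{e-2-2} it takes a genuinely different route from the paper. The paper first proves Takeda's lemma (Lemma \ref{lem-Takeda}) by induction, uses it to get an explicit Stirling-number formula for $B_m^{(k)}(n)$ (Proposition \ref{prop-2-5}), substitutes that into the definition of $\mathscr{B}_m^{(-l)}(n)$, and then must undo the Stirling conversion via \eqref{e-2-6} and the orthogonality relation $\sum_i(-1)^i{n \brack i}{i \brace g}=(-1)^n\delta_{n,g}$ before the final binomial series can be summed. You bypass that entire round trip: working directly with ${\rm Li}_{-k}(z)=\sum_{p\ge 1}p^k z^p$, your index-shift identity $\sum_{l}{\rm Li}_{-(l+j)}(z)\,x^l/l!={\rm Li}_{-j}(ze^x)$ and the rising-factorial evaluation $\sum_{j=0}^n{n \brack j}{\rm Li}_{-j}(w)=n!\,w/(1-w)^{n+1}$ (built on the same identity $(q)_n=\sum_j{n \brack j}q^j$ that the paper also invokes) collapse everything at once. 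This is shorter and more transparent, and it is close in spirit to the paper's second, analytic proof in Section \ref{sec-3}, where the same mechanism drives Lemma \ref{L-Gk} and \eqref{eq-3-2} --- except that you stay at the level of generating functions with no contour integral. What the paper's longer route buys is the intermediate results themselves: Proposition \ref{prop-2-5} is a closed formula of independent interest generalizing Kaneko's theorem for poly-Bernoulli numbers. Your derivation of \eqref{e-2-3} is essentially the paper's (expansion of $\{1-(e^x-1)(e^y-1)\}^{-n-1}$ plus the shifted exponential and ordinary Stirling generating functions), and your symmetric closed form $\mathscr{B}_m^{(-l)}(n)=\sum_j (j+n)!\,j!\,{l+1 \brace j+1}{m+1 \brace j+1}$ is exactly the paper's \eqref{e-2-9}; note that this closed form already yields Corollary \ref{Cor-1} by itself, even before passing to \eqref{e-2-3}. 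One small point to tighten: the convergence remark should be made explicit --- the interchange of $\sum_l$ and $\sum_p$ requires $|1-e^{-y}|e^{|x|}<1$, which holds in a neighborhood of the origin, so your rearrangements are legitimate and the rigor is on par with the paper's own computation.
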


To prove Theorem \ref{Th-main-1}, we start with the following lemma.

\begin{lemma}[Takeda \cite{Takeda}]\label{lem-Takeda}\ \ 
For $n,r\in \mathbb{Z}_{\geq 0}$ with $r\ge n$, 
\begin{align}
&\frac{e^{nt}(e^{t}-1)^{r-n}}{(r-n)!} 
= \sum_{m=0}^{\infty}\sum_{i=0}^{n}(-1)^{n-i}{n \brack i}{m+i \brace r}\frac{t^{m}}{m!}, \label{e-2-5}\\
& \sum_{i=0}^{n}{n \brace i}\frac{e^{it}(e^{t}-1)^{r-i}}{(r-i)!}=\sum_{m=0}^{\infty}{m+n \brace r}\frac{t^{m}}{m!}. \label{e-2-6}
\end{align}
\end{lemma}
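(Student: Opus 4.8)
The plan is to route both identities through the standard exponential generating function
\begin{equation*}
\frac{(e^{t}-1)^{r}}{r!}=\sum_{p=0}^{\infty}{p \brace r}\frac{t^{p}}{p!}
\end{equation*}
for the Stirling numbers of the second kind, together with the differentiation operator $D=d/dt$. Since differentiating this series $i$ times term by term yields
\begin{equation*}
D^{i}\!\left[\frac{(e^{t}-1)^{r}}{r!}\right]=\sum_{m=0}^{\infty}{m+i \brace r}\frac{t^{m}}{m!},
\end{equation*}
the right-hand side of \eqref{e-2-6} is exactly $D^{n}\big[(e^{t}-1)^{r}/r!\big]$. Thus \eqref{e-2-6} is equivalent to the closed form
\begin{equation*}
D^{n}\!\left[\frac{(e^{t}-1)^{r}}{r!}\right]=\sum_{i=0}^{n}{n \brace i}\frac{e^{it}(e^{t}-1)^{r-i}}{(r-i)!},
\end{equation*}
whose right-hand side is the left-hand side of \eqref{e-2-6}; establishing this closed form proves \eqref{e-2-6}.

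I would prove the closed form by induction on $n$, the case $n=0$ being trivial. For the inductive step I would differentiate each summand using
\begin{equation*}
D\!\left[\frac{e^{it}(e^{t}-1)^{r-i}}{(r-i)!}\right]=\frac{i\,e^{it}(e^{t}-1)^{r-i}}{(r-i)!}+\frac{e^{(i+1)t}(e^{t}-1)^{r-i-1}}{(r-i-1)!},
\end{equation*}
where the factor $r-i$ conveniently cancels $(r-i)!$ down to $(r-i-1)!$. Collecting the coefficient of $e^{jt}(e^{t}-1)^{r-j}/(r-j)!$ in $D^{n+1}\big[(e^{t}-1)^{r}/r!\big]$ then picks up a contribution $j{n \brace j}$ from the index $i=j$ in the first piece and ${n \brace j-1}$ from $i=j-1$ in the second piece; their sum is ${n+1 \brace j}$ by the recursion ${n+1 \brace j}={n \brace j-1}+j{n \brace j}$, which closes the induction.

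For \eqref{e-2-5} I would use that it is the Stirling inversion of \eqref{e-2-6}. Setting $G_{i}(t)=e^{it}(e^{t}-1)^{r-i}/(r-i)!$ and $F_{i}(t)=\sum_{m}{m+i \brace r}t^{m}/m!$, identity \eqref{e-2-6} reads $F_{n}=\sum_{i=0}^{n}{n \brace i}G_{i}$. Because the matrices $\big((-1)^{n-i}{n \brack i}\big)$ and $\big({i \brace j}\big)$ are mutually inverse, that is $\sum_{i}(-1)^{n-i}{n \brack i}{i \brace j}=\delta_{nj}$, inverting this triangular linear relation gives $G_{n}=\sum_{i=0}^{n}(-1)^{n-i}{n \brack i}F_{i}$, which is precisely \eqref{e-2-5}. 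Concretely, one substitutes the closed form for each $F_{i}$, interchanges the summations over $i$ and the inner index $j$ (with $j\le i\le n$), and applies the orthogonality relation so that only the $j=n$ term survives, leaving $G_{n}(t)=e^{nt}(e^{t}-1)^{r-n}/(r-n)!$.

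The main obstacle I anticipate is purely combinatorial bookkeeping: aligning the index shifts in the inductive step — in particular matching the $i=j-1$ contribution to the recursion — and keeping the summation ranges correct in the inversion step. The analytic input is minimal, since everything reduces to term-by-term differentiation of one generating function plus the standard orthogonality of Stirling numbers.
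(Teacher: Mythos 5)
Your proposal is correct, but it routes the two identities differently than the paper does. The paper proves \eqref{e-2-5} first, by induction on $n$: the base case is the same generating function $(e^{t}-1)^{r}/r!=\sum_{p}{p \brace r}t^{p}/p!$, and the inductive step differentiates the identity for $n$ and absorbs the resulting terms via the first-kind recursion ${n+1 \brack i}={n \brack i-1}+n{n \brack i}$; it then disposes of \eqref{e-2-6} with the remark that a similar differentiation-induction works. You instead prove \eqref{e-2-6} first, with the nice observation that its right-hand side is exactly $D^{n}\bigl[(e^{t}-1)^{r}/r!\bigr]$, so the induction (using the second-kind recursion ${n+1 \brace j}={n \brace j-1}+j{n \brace j}$) establishes a closed form for an iterated derivative; you then obtain \eqref{e-2-5} with no second induction at all, by inverting the triangular relation $F_{n}=\sum_{i\le n}{n \brace i}G_{i}$ through the orthogonality $\sum_{i}(-1)^{n-i}{n \brack i}{i \brace j}=\delta_{nj}$ --- a relation the paper itself invokes (in the proof of its Theorem 2.2), so you stay within its toolkit. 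What your route buys is economy and transparency: one induction instead of two, and an explicit explanation of why the two identities are Stirling transforms of one another; what the paper's route buys is that each identity is self-contained, with \eqref{e-2-5} (the one actually used in Proposition 2.4) proved directly rather than as a corollary. One small point worth making explicit in your write-up: in the inductive step the cancellation $(r-i)\cdot\frac{1}{(r-i)!}=\frac{1}{(r-i-1)!}$ presumes $r-i\ge 1$, which is guaranteed because you prove the case $n+1$ only for $r\ge n+1$ and all summation indices satisfy $i\le n$; the paper's induction has the same implicit restriction ("for $r\geq n+1$"), so this is a matter of wording, not a gap.
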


\begin{proof}
We sketch the proof of this lemma. 

As for \eqref{e-2-5}, we use the induction on $n\geq 0$. 
The case of $n=0$ reduces to the well-known identity
\[ \frac{(e^{t}-1)^{m}}{m!} = \sum_{n=m}^{\infty}{n \brace m}\frac{t^{n}}{n!}.\]
(See for instance \cite[Proposition 2.6]{AIK2014}.) Assume \eqref{e-2-5} for the case of $n$, and compute its derivative. 
Then, for $r\geq n+1$, we have
\begin{equation*}
\frac{ne^{nt}(e^{t}-1)^{r-n}}{(r-n)!} + \frac{e^{(n+1)t}(e^{t}-1)^{r-n-1}}{(r-n-1)!}=\sum_{m=0}^{\infty}
\sum_{i=0}^{n}(-1)^{n-i}{n \brack i}{m+i+1 \brace r}\frac{t^{m}}{m!}.
\end{equation*}
By the induction hypothesis, we obtain
\begin{align*} \nonumber
& \lefteqn{\frac{e^{(n+1)t}(e^{t}-1)^{r-n-1}}{(r-n-1)!}} \\ \nonumber
& =  \sum_{m=0}^{\infty}\sum_{i=0}^{n}(-1)^{n-i}{n \brack i}{m+i+1 \brace r}\frac{t^{m}}{m!}
- \frac{ne^{nt}(e^{t}-1)^{r-n}}{(r-n)!} \\ \nonumber
& =  \sum_{m=0}^{\infty}\sum_{i=1}^{n+1}(-1)^{n-i+1}{n \brack i-1}{m+i \brace r}\frac{t^{m}}{m!}
- n\sum_{m=0}^{\infty}\sum_{i=0}^{n}(-1)^{n-i}{n \brack i}{m+i \brace r}\frac{t^{m}}{m!} \\ 
& =  \sum_{m=0}^{\infty}\sum_{i=1}^{n+1}(-1)^{n-i+1}\left({n \brack i-1}+n{n \brack i}\right)
{m+i \brace r}\frac{t^{m}}{m!}\\
& =  \sum_{m=0}^{\infty}\sum_{i=0}^{n+1}(-1)^{n-i+1}{n+1 \brack i}{m+i \brace r}\frac{t^{m}}{m!}.
\end{align*}
Therefore we complete the proof of \eqref{e-2-5}.

As for \eqref{e-2-6}, similar to the above proof, 
considering the derivative of \eqref{e-2-6}, we inductively obtain the assertion. 
\end{proof}

Next we show the following proposition which is a certain generalization of the known result for 
ordinary poly-Bernoulli numbers given by the first-named author \cite[Theorem 1]{Kaneko1997}. 

\begin{prop}\label{prop-2-5} \ \ 
For $m,n\in \mathbb{Z}_{\geq 0}$ and $k\in \mathbb{Z}$, 
\begin{equation}
B_{m}^{(k)}(n) =  \sum_{q=1}^{m+1}\sum_{i=0}^{n}(-1)^{m+n+q-i-1}\frac{(q-1)!}{q^{k}}
{n \brack i}{m+i \brace n+q-1}. \label{e-2-7}
\end{equation}
\end{prop}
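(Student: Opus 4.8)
The plan is to establish the closed formula \eqref{e-2-7} for $B_m^{(k)}(n)$ by expanding the generating-function definition \eqref{e-1-6} and reorganizing it via Lemma~\ref{lem-Takeda}. Concretely, I would start from
\[
\sum_{m=0}^\infty B_m^{(k)}(n)\frac{t^m}{m!}
= e^{-nt}\,\frac{\mathrm{Li}_k(1-e^{-t})}{1-e^{-t}},
\]
substitute the series $\mathrm{Li}_k(z)=\sum_{q\ge 1} z^q/q^k$ with $z=1-e^{-t}$, and write $\tfrac{1}{1-e^{-t}}\cdot(1-e^{-t})^q = (1-e^{-t})^{q-1}$. After the change of sign $1-e^{-t} = -(e^{-t}-1)$, this turns the right-hand side into a sum over $q\ge 1$ of terms of the shape $e^{-nt}(e^{-t}-1)^{q-1}/q^k$ up to a sign $(-1)^{q-1}$. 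The goal is then to read off the coefficient of $t^m/m!$ in each such term.

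The key step is to recognize that each term $e^{-nt}(e^{-t}-1)^{q-1}$ is exactly the object controlled by formula \eqref{e-2-5} of Lemma~\ref{lem-Takeda}, after the replacement $t\mapsto -t$ and with the exponent matching $r-n=q-1$, i.e.\ $r=n+q-1$. Indeed \eqref{e-2-5} gives, under $t\mapsto -t$,
\[
\frac{e^{-nt}(e^{-t}-1)^{r-n}}{(r-n)!}
= \sum_{m=0}^\infty \sum_{i=0}^n (-1)^{n-i}{n\brack i}{m+i\brace r}(-1)^m\frac{t^m}{m!},
\]
so that the coefficient of $t^m/m!$ in $e^{-nt}(e^{-t}-1)^{q-1}$ is $(q-1)!\sum_{i=0}^n(-1)^{m+n-i}{n\brack i}{m+i\brace n+q-1}$. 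Multiplying by the factor $(-1)^{q-1}/q^k$ coming from the polylogarithm expansion and summing over $q$ should produce precisely the claimed right-hand side of \eqref{e-2-7}, with overall sign $(-1)^{m+n+q-i-1}$. I would then carefully collate the signs: the contributions are $(-1)^{q-1}$ from $(1-e^{-t})^{q-1}=(-1)^{q-1}(e^{-t}-1)^{q-1}$, $(-1)^{n-i}$ from Lemma~\ref{lem-Takeda}, and $(-1)^m$ from $t\mapsto -t$, whose product is $(-1)^{m+n+q-i-1}$, matching the stated exponent.

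The main obstacle I anticipate is purely bookkeeping rather than conceptual: one must align the summation index $q$ from the polylogarithm (which runs $q\ge 1$, truncating automatically to $q\le m+1$ because ${m+i\brace n+q-1}$ vanishes once $n+q-1 > m+i$, i.e.\ once $q > m+i-n+1$, and the top index $q=m+1$ survives via $i=n$) with the index $r=n+q-1\ge n$ required by the hypothesis $r\ge n$ of Lemma~\ref{lem-Takeda}. I would verify that $r=n+q-1\ge n$ holds for all $q\ge 1$, so the lemma applies for every term, and that the upper limit $q\le m+1$ in \eqref{e-2-7} is correct because the Stirling number ${m+i\brace n+q-1}$ forces the second kind index not to exceed $m+i\le m+n$. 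Finally, for $k\in\mathbb{Z}$ (including $k\le 0$) the polylogarithm expansion $\mathrm{Li}_k(z)=\sum_{q\ge1}q^{-k}z^q$ is a formal identity in the variable $z=1-e^{-t}$ near $t=0$, so no convergence issue intervenes and the coefficient comparison is legitimate as an identity of formal power series in $t$.
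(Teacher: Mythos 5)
Your proposal is correct and follows essentially the same route as the paper's own proof: expand $\mathrm{Li}_k(1-e^{-t})/(1-e^{-t})$ as $\sum_{q\ge1}(-1)^{q-1}(e^{-t}-1)^{q-1}/q^k$, apply \eqref{e-2-5} with $t\mapsto -t$ and $r=n+q-1$, collect the signs $(-1)^{q-1}(-1)^{n-i}(-1)^m=(-1)^{m+n+q-i-1}$, and truncate at $q\le m+1$ via the vanishing of ${m+i\brace n+q-1}$. Your extra remarks (checking $r\ge n$, and the formal-power-series justification for $k\le 0$) are sound and only make the bookkeeping more explicit than the paper's version.
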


\begin{proof}
By definition, we have 
\begin{align*} 
e^{-nt}\frac{{\rm Li}_{k}(1-e^{-t})}{1-e^{-t}}
& =  e^{-nt}\sum_{q=1}^{\infty}(-1)^{q-1}\frac{(e^{-t}-1)^{q-1}}{q^{k}}.
\end{align*}
Using \eqref{e-2-5} with $r\to q-1 $ and $t\to -t$ on the right, we obtain
\begin{align*}
e^{-nt}\frac{{\rm Li}_{k}(1-e^{-t})}{1-e^{-t}}
& =  \sum_{m=0}^{\infty}\sum_{q=1}^{\infty}\sum_{i=0}^{n}(-1)^{m+n+q-i-1}\frac{(q-1)!}{q^{k}}
{n \brack i}{m+i \brace n+q-1}\frac{t^{m}}{m!}.
\end{align*}
Comparing the coefficients of $t^m/m!$ on both sides and noting ${m+i \brace n+q-1}=0$ when $q>m+1$, we complete the proof.
\end{proof}

Now we give the proof of Theorem \ref{Th-main-1}. 

\begin{proof}[Proof of Theorem \ref{Th-main-1}] \ 
First we will prove \eqref{e-2-2}. 
Substituting \eqref{e-2-7} with $k\to -l-j$ into \eqref{e-2-1}, we obtain 
\begin{align*}
\mathscr{B}_m^{(-l)}(n)
& =  \sum_{j=0}^{n}{n \brack j}\sum_{q=1}^{m+1}\sum_{i=0}^{n}(-1)^{m+n+q-i-1}(q-1)!q^{l+j}
{n \brack i}{m+i \brace n+q-1}.
\end{align*}
With this we compute the generating function
\begin{align*} \nonumber
& F_n(x,y):=\lefteqn{\sum_{l=0}^{\infty}\sum_{m=0}^{\infty}\mathscr{B}_m^{(-l)}(n)\frac{x^{l}}{l!}\frac{y^{m}}{m!}} \\ \nonumber
& \ =  \sum_{l=0}^{\infty}\sum_{m=0}^{\infty}\left(
\sum_{j=0}^{n}{n \brack j}\sum_{q=1}^{m+1}\sum_{i=0}^{n}(-1)^{m+n+q-i-1}(q-1)!\ q^{l+j}
{n \brack i}{m+i \brace n+q-1}\right)\frac{x^{l}}{l!}\frac{y^{m}}{m!} \\ 
& \ =  \sum_{m=0}^{\infty}\sum_{q=1}^{m+1}(-1)^{m}
\sum_{j=0}^{n}{n \brack j}q^{j}\sum_{i=0}^{n}(-1)^{n+q-i-1}e^{qx}(q-1)!
{n \brack i}{m+i \brace n+q-1}\frac{y^{m}}{m!}. 
\end{align*}
By the well-known identity $(x)_n:=x(x+1)\cdots(x+n-1)=\sum_{j=0}^n{n \brack j}x^j$,  this is equal to 
\begin{align*} \nonumber
& \sum_{m=0}^{\infty}\sum_{q=1}^{m+1}(-1)^{m}
\sum_{i=0}^{n}(-1)^{n+q-i-1}e^{qx}(q-1)!(q)_n
{n \brack i}{m+i \brace n+q-1}\frac{y^{m}}{m!} \\ \nonumber
& =  \sum_{r=0}^{\infty}
\sum_{i=0}^{n}(-1)^{n+r-i}e^{(r+1)x}r!(r+1)_n
{n \brack i} \sum_{m=r}^{\infty}{m+i \brace n+r}\frac{(-y)^{m}}{m!}.
\end{align*}
Note that $m$ may run over all non-negative integers in the last sum because ${m+i \brace n+r}=0$ for $0\leq m\leq r-1$. 
Hence, by \eqref{e-2-6} and the formula $\sum_{l=0}^\infty (-1)^{l}{n \brack l}{l \brace m} = (-1)^{n}\delta_{m, n}$ 
($\delta_{m, n}$ is the Kronecker delta,
see \cite[Proposition 2.6]{AIK2014}), we have 
\begin{align*} \nonumber
 F_n(x,y)  &  =  \sum_{r=0}^{\infty}\sum_{i=0}^{n}\sum_{g=0}^{i}
(-1)^{n+r-i}e^{(r+1)x}r!(r+1)_n{n \brack i}
\frac{{i \brace g}e^{-gy}(e^{-y}-1)^{n+r-g}}{(n+r-g)!} \\ \nonumber
& =  \sum_{r=0}^{\infty}(-1)^{n+r}e^{(r+1)x}r!(r+1)_n
\sum_{g=0}^{n}\sum_{i=g}^{n}(-1)^{i}{n \brack i}{i \brace g}
\frac{e^{-gy}(e^{-y}-1)^{n+r-g}}{(n+r-g)!} \\
& =  \sum_{r=0}^{\infty}(-1)^{n+r}e^{(r+1)x}r!(r+1)_n
\sum_{g=0}^{n}(-1)^{n}\delta_{n, g}\frac{e^{-gy}(e^{-y}-1)^{n+r-g}}{(n+r-g)!}\\
& =  e^{-ny}e^{x}\sum_{r=0}^{\infty}e^{rx}(1-e^{-y})^{r}(r+1)_n \\
& =  e^{-ny}e^{x}n!\sum_{r=0}^{\infty}(e^{x}-e^{x-y})^{r}{n+r \choose n} \\
& =  \frac{n!e^{x+y}}{(e^{x}+e^{y}-e^{x+y})^{n+1}}.
\end{align*}
This completes the proof of \eqref{e-2-2}.

Next we will prove \eqref{e-2-3}. From \eqref{e-2-2}, we have
\begin{align*}
F_n(x,y) & =  \frac{n! e^{x+y}}{\{ 1-(e^{x}-1)(e^{y}-1) \}^{n+1}} \\
& =  n!\ e^{x+y}\sum_{j=0}^{\infty}{j+n \choose n}(e^{x}-1)^{j}(e^{y}-1)^{j} \\
& =  n! \sum_{j=0}^{\infty}{j+n \choose n}\frac{1}{(j+1)^{2}}
           \frac{d}{dx}(e^{x}-1)^{j+1}\frac{d}{dy}(e^{y}-1)^{j+1} \\
& =  n! \sum_{j=0}^{\infty}\sum_{l=j}^{\infty}\sum_{m=j}^{\infty}(j!)^{2}
          {j+n \choose n}{l+1 \brace j+1}{m+1 \brace j+1}\frac{x^{l}}{l!}\frac{y^{m}}{m!}.
\end{align*}
Hence, noting ${l+1 \brace j+1}{m+1 \brace j+1} = 0$ for $j > \min(l, m)$, we obtain
\begin{equation}
\mathscr{B}_m^{(-l)}(n) 
= \sum_{j=0}^{\min(l, m)}n!\ (j!)^{2}{j+n \choose n}{l+1 \brace j+1}{m+1 \brace j+1}. \label{e-2-9}
\end{equation}
By the identity (see \cite[Proposition 2.6]{AIK2014})
\[ Q_{m}(t)=\frac{t^{m+1}}{(1-t)(1-2t) \cdots (1-(m+1)t)} = \sum_{n=m+1}^{\infty}{n \brace m+1}t^{n}\]
and \eqref{e-2-9}, we have
\begin{align*}
\sum_{j=0}^{\infty}j!\ (j+n)!\ Q_{j}(x)Q_{j}(y)
& =  \sum_{j=0}^{\infty}j!\ (j+n)!\ \sum_{l=j}^{\infty}{l+1 \brace j+1}x^{l}
           \sum_{m=j}^{\infty}{m+1 \brace j+1}y^{m} \\
& =  \sum_{l=0}^{\infty}\sum_{m=0}^{\infty}\sum_{j=0}^{\min(l, m)}
           n!\ (j!)^{2}{j+n \choose n}{l+1 \brace j+1}{m+1 \brace j+1}x^{l}y^{m} \\
& =  \sum_{l=0}^{\infty}\sum_{m=0}^{\infty}
          \mathscr{B}_m^{(-l)}(n)\ x^{l}y^{m}. 
\end{align*}

Thus we complete the proof of \eqref{e-2-3}.
\end{proof}

From \eqref{e-2-2}, we immediately obtain the following.

\begin{corollary}\label{Cor-3}
$$
\sum_{l=0}^{\infty}\sum_{m=0}^{\infty}\sum_{n=0}^{\infty}\mathscr{B}_m^{(-l)}(n)\frac{x^{l}}{l!}\frac{y^{m}}{m!}\frac{z^{n}}{n!}
= \frac{e^{x+y}}{e^{x}+e^{y}-e^{x+y}-z}.
$$
\end{corollary}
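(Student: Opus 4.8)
The plan is to take the closed form for the double generating function established in Theorem \ref{Th-main-1}, namely
\begin{equation*}
F_n(x,y)=\sum_{l=0}^{\infty}\sum_{m=0}^{\infty}\mathscr{B}_{m}^{(-l)}(n)\frac{x^{l}}{l!}\frac{y^{m}}{m!}
=\frac{n!\,e^{x+y}}{(e^{x}+e^{y}-e^{x+y})^{n+1}},
\end{equation*}
and simply incorporate the third variable $z$ by forming $\sum_{n=0}^{\infty}F_n(x,y)\,z^n/n!$. The left-hand side of this weighted sum is precisely the triple generating function appearing in the corollary, so the entire task reduces to summing the right-hand side over $n$.

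First I would substitute the closed form of $F_n(x,y)$ into the sum over $n$ and observe that the factor $n!$ coming from \eqref{e-2-2} cancels against the $1/n!$ weight, leaving
\begin{equation*}
\sum_{n=0}^{\infty}\frac{n!\,e^{x+y}}{(e^{x}+e^{y}-e^{x+y})^{n+1}}\cdot\frac{z^{n}}{n!}
=e^{x+y}\sum_{n=0}^{\infty}\frac{z^{n}}{(e^{x}+e^{y}-e^{x+y})^{n+1}}.
\end{equation*}
Setting $w:=e^{x}+e^{y}-e^{x+y}$, the remaining series is geometric in $z/w$, and evaluating it as $w^{-1}\sum_{n\ge0}(z/w)^n=(w-z)^{-1}$ yields exactly $e^{x+y}/(w-z)=e^{x+y}/(e^{x}+e^{y}-e^{x+y}-z)$, which is the claimed identity.

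There is essentially no obstacle here beyond bookkeeping: the only point requiring a word of care is the range of validity of the geometric summation. Since $w=e^{x}+e^{y}-e^{x+y}=1-(e^{x}-1)(e^{y}-1)\to1$ as $(x,y)\to(0,0)$, the factor $z/w$ lies in the open unit disk for $x,y,z$ in a neighborhood of the origin, so the manipulation is justified as an identity of convergent power series (equivalently, the interchange of summation and the geometric expansion is legitimate at the level of formal power series in $x,y,z$). Thus the corollary follows immediately from \eqref{e-2-2}.
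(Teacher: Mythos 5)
Your proposal is correct and is exactly the paper's own argument: the paper derives Corollary \ref{Cor-3} directly from \eqref{e-2-2} by multiplying $F_n(x,y)$ by $z^n/n!$, cancelling the $n!$, and summing the resulting geometric series in $z/(e^x+e^y-e^{x+y})$. Your extra remark on convergence near the origin (or interpretation as formal power series) is a harmless elaboration of what the paper leaves implicit.
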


\ 

\section{An analytic proof of the duality formula for $\mathscr{B}_m^{(-l)}(n)$}\label{sec-3}

In this section, we give an analytic proof of the duality formula (Corollary \ref{Cor-1}) 
for $\mathscr{B}_m^{(-l)}(n)$  by using certain zeta-function.

Arakawa and the first-named author \cite{AK1999} defined the zeta-function 
\begin{equation*}
\xi_k(s)=\frac{1}{\Gamma(s)}\int_0^\infty {t^{s-1}}\frac{{\rm Li}_{k}(1-e^{-t})}{e^t-1}dt \quad ({\rm Re}(s)>0) 
\end{equation*}
for $k \in {\mathbb{Z}}_{\geq 1}$, 
which can be continued to $\mathbb{C}$ as an entire function. In particular, $\xi_1(s)=s\zeta(s+1)$. It is known that
\begin{equation*}
\xi_k(-m)=(-1)^m C_m^{(k)}  
\end{equation*}
for $m\in \mathbb{Z}_{\geq 0}$ (see \cite[Theorem 6]{AK1999}). Note that they further study a multiple version of $\xi_k(s)$. 
\if0
As a generalization of $\xi(k;s)$, Coppo and Candelpergher \cite{CC2010} defined 
\begin{equation}
\xi_k(s;w)=\frac{1}{\Gamma(s)}\int_0^\infty {t^{s-1}}e^{-wt}\frac{{\rm Li}_{k}(1-e^{-t})}{1-e^{-t}}dt  \label{CC}
\end{equation}
for $k\in \mathbb{Z}_{\geq 1}$ and $w>0$, and studied its property. Note that $\xi_k(s;1)=\xi_k(s)$. 
\fi

Recently the first-named and the third-named author \cite{Kaneko-Tsumura2015} defined another type of 
Arakawa-Kaneko zeta-function by 
\begin{equation*}
\eta_k(s)=\frac{1}{\Gamma(s)}\int_{0}^\infty t^{s-1}\frac{{\rm Li}_{k}(1-e^t)}{1-e^t}dt 
\end{equation*}
for $s\in \mathbb{C}$ and $k\in \mathbb{Z}$, which interpolates the poly-Bernoulli numbers of $B$-type, that is,
\begin{equation}
\eta_k(-m)=B_m^{(k)}\qquad (m\in \mathbb{Z}_{\geq 0}). \label{eta-val}
\end{equation}
We emphasize that $\eta_k(s)$ is defined for any $k\in \mathbb{Z}$ while $\xi_k(s)$ is defined for $k\in \mathbb{Z}_{\geq 1}$. 
In fact, investigating $\eta_{-k}(s)$ $(k\in \mathbb{Z}_{\geq 0})$, they gave an alternative proof of \eqref{e-1-5} 
(the case $r=1$ of \cite[Theorem 4.7]{Kaneko-Tsumura2015}). 

\if0
Also, investigating 
\begin{equation}
\widetilde{\xi}_{-k}(s)=\frac{1}{\Gamma(s)}\int_{0}^\infty t^{s-1}\frac{{\rm Li}_{-k}(1-e^t)}{e^{-t}-1}dt\quad 
(k\in \mathbb{Z}_{\geq 0}), \label{tilde-xi}
\end{equation}
they recovered \eqref{e-1-5-2}.
\fi

Here we briefly recall this technique (for the details, see \cite[Section 4]{Kaneko-Tsumura2015}), and consider its generalization as follows. 
Let
\begin{equation*}
\mathcal{G}(u,t): =\frac{e^{u}}{1-e^u(1-e^t)} 
\end{equation*}
and 
\begin{equation}
\mathcal{F}(u,s):=\frac{1}{\Gamma(s)(e^{2\pi i s}-1)}\int_\mathcal{C} t^{s-1}\mathcal{G}(u,t)dt, \label{eq-3-1}
\end{equation}
where $\mathcal{C}$ is the well-known contour, namely the path consisting of the positive real axis (top side), 
a circle $C_\varepsilon$ around the origin of radius $\varepsilon$ (which is sufficiently small), and the positive 
real axis (bottom side) (see, for example, \cite[Theorem 4.2]{Wa}). 
We can write the integral as
\begin{align}
\mathcal{F}(u,s)& =\frac{1}{\Gamma(s)}\int_{\varepsilon}^\infty t^{s-1}\mathcal{G}(u,t)\,dt+
\frac{1}{\Gamma(s)(e^{2\pi i s}-1)}\int_{C_\varepsilon}t^{s-1}\mathcal{G}(u,t)\,dt.\label{cont}
\end{align}
Suppose ${\rm Re}(s)>0$ and let $\varepsilon \to 0$. Then
\begin{align}
\mathcal{F}(u,s)&=\frac{1}{\Gamma(s)}\int_{0}^\infty t^{s-1}\mathcal{G}(u,t)dt=\sum_{m=0}^\infty \eta_{-m}(s)\frac{u^m}{m!},  \label{eq-2}
\end{align}
because 
\begin{equation}
\mathcal{G}(u,t)=\frac{1}{1-e^t}\sum_{m=0}^\infty{\rm Li}_{-m}(1-e^t)\frac{u^{m}}{m!} \label{eq-1}
\end{equation}
(see \cite[Lemma 5.9]{Kaneko-Tsumura2015}). 
We also see that 
\begin{align}
\mathcal{G}(u,t)& =\frac{e^{u}}{1-e^u(1-e^t)}= \frac{e^{-t}}{1-e^{-t}(1-e^{-u})}=\sum_{l=1}^\infty e^{-lt}(1-e^{-u})^{l-1}. \label{eq-5}
\end{align}
Substituting \eqref{eq-5} into the second member of \eqref{eq-2}, we have
\begin{equation}
\mathcal{F}(u,s)=\frac{{\rm Li}_{s}(1-e^{-u})}{1-e^{-u}}=\sum_{m=0}^\infty B_m^{(s)}\frac{u^m}{m!}, \label{eq-6}
\end{equation}
where we define ${\rm Li}_s(z)$ and $\{ B_m^{(s)}\}_{m\geq 0}$ by replacing $k$ by $s\in \mathbb{C}$ in \eqref{e-1-1} and \eqref{e-1-3}, respectively. 
Comparing \eqref{eq-2} and \eqref{eq-6}, we have
\begin{equation}
\eta_{-m}(s)=B_m^{(s)}. \label{Dual-func}
\end{equation}
Letting $s=-k\in \mathbb{Z}_{\leq 0}$ in \eqref{Dual-func} and using \eqref{eta-val} and \eqref{cont}, we obtain $B_k^{(-m)}=B_m^{(-k)}$.

Next we generalize this result. Let
\begin{equation*}
\mathcal{G}_n(u,t): =e^{nt}\sum_{j=0}^{n} {n \brack j} \frac{\partial^j}{\partial u^j}\mathcal{G}(u,t)\quad (n\in \mathbb{Z}_{\geq 0}). 
\end{equation*}
Note that $\mathcal{G}_0(u,t)=\mathcal{G}(u,t)$. We prove the following.

\begin{lemma}\label{L-Gk}\ \ 
For $n\in \mathbb{Z}_{\geq 0}$,
\begin{align}
\mathcal{G}_n(u,t)& =e^{-nu}\sum_{m=1}^\infty\,\frac{(m+n-1)!}{(m-1)!}e^{-mt}(1-e^{-u})^{m-1}. \label{eq-2-3}
\end{align}
\end{lemma}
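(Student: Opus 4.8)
The plan is to prove Lemma~\ref{L-Gk} by starting from the series expansion \eqref{eq-5}, namely $\mathcal{G}(u,t)=\sum_{l=1}^\infty e^{-lt}(1-e^{-u})^{l-1}$, which already exhibits $\mathcal{G}$ as a power series in $e^{-t}$ with coefficients that are elementary in $u$. Since the definition of $\mathcal{G}_n(u,t)$ applies the operator $e^{nt}\sum_{j=0}^n {n \brack j}\partial_u^j$, the first step is to understand how $\partial_u^j$ acts on each summand $e^{-lt}(1-e^{-u})^{l-1}$. I would substitute $v=1-e^{-u}$, so that $1-v=e^{-u}$ and $\partial_u=(1-v)\partial_v$, and compute $\partial_u^j(v^{l-1})$; the key algebraic fact I expect to need is the Stirling-number identity that converts the sum $\sum_{j=0}^n {n \brack j}\partial_u^j$ into a rising/falling-factorial multiplier, paralleling the use of $(x)_n=\sum_{j=0}^n{n \brack j}x^j$ already invoked in the proof of Theorem~\ref{Th-main-1}.

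First I would make the change of variable explicit and record that acting with $\partial_u = (1-e^{-u})'\!\cdot$, hence $\partial_u e^{-lu}=-l\,e^{-lu}$, suggests writing things in terms of $e^{-u}$ rather than $v$. In fact the cleanest route is to expand $(1-e^{-u})^{l-1}=\sum_{p}\binom{l-1}{p}(-1)^p e^{-pu}$, apply $\partial_u^j$ to get factors $(-p)^j$, and then use $\sum_{j=0}^n {n \brack j}(-p)^j=(-p)(-p+1)\cdots(-p+n-1)=(-1)^n (p)_n^{\downarrow}$ — the falling factorial — via the defining identity for Stirling numbers of the first kind. This should collapse the double sum over $j$ and $p$ into a single clean expression; the multiplication by $e^{nt}$ then shifts each $e^{-lt}$ to $e^{-(l-n)t}$, and reindexing $m=l-n$ (or the appropriate shift) ought to produce the claimed $e^{-mt}$ together with the ratio $(m+n-1)!/(m-1)!$ and the prefactor $e^{-nu}(1-e^{-u})^{m-1}$.

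The main obstacle I anticipate is the bookkeeping in the combinatorial collapse: matching the falling-factorial evaluation $\sum_{j=0}^n{n\brack j}x^j$ (which is naturally a rising factorial $(x)_n$ in the paper's convention) against the factor $(m+n-1)!/(m-1)!=(m)_n$ that appears on the right-hand side, and confirming that the $e^{-nu}$ factor emerges correctly rather than some other power of $e^{-u}$. Concretely, I would expect that after applying $\partial_u^j$ and summing against ${n \brack j}$, the $u$-dependence reorganizes so that $\sum_{l}$ telescopes into a single geometric-type series whose coefficient of $e^{-mt}$ is exactly $\tfrac{(m+n-1)!}{(m-1)!}e^{-nu}(1-e^{-u})^{m-1}$; verifying the index shift and the rising/falling factorial identity is where care is needed.

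An alternative, perhaps slicker, approach that I would keep in reserve is induction on $n$, mirroring the structure of Lemma~\ref{lem-Takeda}: one checks the base case $\mathcal{G}_0=\mathcal{G}$ directly against \eqref{eq-5}, then relates $\mathcal{G}_{n+1}$ to $\mathcal{G}_n$ using the Stirling recursion ${n+1 \brack m}={n \brack m-1}+n{n \brack m}$ together with the product rule for $\partial_u$. This would reduce the claim to a one-step identity $e^{(n+1)t}\partial_u$ applied termwise, where the factor $(m+n-1)!/(m-1)!$ increments to $(m+n)!/(m-1)!$ in a way that is easy to track coefficient by coefficient. Whichever route I take, the essential input is the same Stirling-number generating identity, and I expect the direct computation to be the most transparent once the variable $v=1-e^{-u}$ is introduced.
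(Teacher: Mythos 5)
Your main route is correct, and it is genuinely different from the paper's proof; in fact, the induction you hold ``in reserve'' is precisely what the paper does. The paper's proof uses the Stirling recursion \eqref{Stir} to get the operator identity $\frac{\partial}{\partial u}\mathcal{G}_n(u,t)=e^{-t}\mathcal{G}_{n+1}(u,t)-n\mathcal{G}_n(u,t)$, i.e.\ $\mathcal{G}_{n+1}=e^{t}\bigl(\partial_u\mathcal{G}_n+n\mathcal{G}_n\bigr)$, then differentiates the closed form \eqref{eq-2-3} termwise and shifts $m\mapsto m+1$, with base case $n=0$ given by \eqref{eq-5}. Your direct computation also closes, and the bookkeeping you flag as the main obstacle does come out right: expanding \eqref{eq-5} binomially in $e^{-pu}$ and using $\sum_{j=0}^{n}{n \brack j}(-p)^{j}=(-p)(-p+1)\cdots(-p+n-1)=(-1)^{n}p(p-1)\cdots(p-n+1)$ gives
\begin{equation*}
\mathcal{G}_n(u,t)=e^{nt}\sum_{l=1}^{\infty}e^{-lt}\sum_{p=0}^{l-1}\binom{l-1}{p}(-1)^{p-n}\,p(p-1)\cdots(p-n+1)\,e^{-pu};
\end{equation*}
the falling factorial kills every term with $p<n$, so only $l\geq n+1$ survives, and setting $l=m+n$, $p=q+n$ together with the elementary identity
\begin{equation*}
\binom{m+n-1}{q+n}\,\frac{(q+n)!}{q!}=\frac{(m+n-1)!}{q!\,(m-1-q)!}=\frac{(m+n-1)!}{(m-1)!}\binom{m-1}{q}
\end{equation*}
collapses the double sum to $e^{-nu}\sum_{m\geq1}\frac{(m+n-1)!}{(m-1)!}e^{-mt}\sum_{q=0}^{m-1}\binom{m-1}{q}(-1)^{q}e^{-qu}$, which is exactly \eqref{eq-2-3}. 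Comparing the two: the paper's induction is shorter, structurally parallel to Lemma \ref{lem-Takeda}, and avoids all reindexing, but it only verifies a formula given in advance; your expansion derives the closed form from scratch, uses nothing beyond the rising-factorial identity $(x)_n=\sum_{j=0}^{n}{n \brack j}x^{j}$ already invoked in Section \ref{sec-2}, and makes visible how the operator $\sum_{j}{n \brack j}\partial_u^{j}$ acts as a factorial multiplier on each exponential, at the cost of the double-sum bookkeeping displayed above.
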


\begin{proof}\ \ We give the proof by induction on $n$. As for $n=0$, \eqref{eq-2-3} coincides with \eqref{eq-5}. 

Using \eqref{Stir}, we can check that
$$\frac{\partial}{\partial u}\mathcal{G}_n(u,t)=e^{-t}\mathcal{G}_{n+1}(u,t)-n\mathcal{G}_n(u,t).$$
Hence we have
\begin{align*}
\mathcal{G}_{n+1}(u,t)&=e^t\left(\frac{\partial}{\partial u}\mathcal{G}_n(u,t)+n\mathcal{G}_n(u,t)\right)\\
& =e^t\left(e^{-nu}\sum_{m= 2}^\infty \frac{(m+n-1)!}{(m-2)!}e^{-mt}(1-e^{-u})^{m-2}e^{-u}\right).
\end{align*}
Replacing $m$ by $m+1$, we have the assertion.
\end{proof}

Similar to \eqref{eq-3-1}, let 
\begin{align}
\mathcal{F}_n(u,s):&=\frac{1}{\Gamma(s)(e^{2\pi i s}-1)}\int_\mathcal{C} t^{s-1}\mathcal{G}_n(u,t)dt\notag\\
&=\frac{1}{\Gamma(s)}\int_{\varepsilon}^\infty t^{s-1}\mathcal{G}_n(u,t)dt+\frac{1}{\Gamma(s)(e^{2\pi i s}-1)}\int_{C_\varepsilon} t^{s-1}\mathcal{G}_n(u,t)dt.  \label{eq-9}
\end{align}

Assume $n\geq 1$. 
First, for ${\rm Re}(s)>1$, let $\varepsilon \to 0$ in \eqref{eq-9}. Then we obtain from \eqref{eq-2-3} that
\begin{align}
\mathcal{F}_n(u;s)&=\frac{e^{-nu}}{\Gamma(s)}\sum_{m=1}^\infty\,\frac{(m+n-1)!}{(m-1)!}(1-e^{-u})^{m-1}\int_0^\infty t^{s-1}e^{-mt}dt \notag\\
&=\frac{e^{-nu}}{1-e^{-u}}\sum_{m=1}^\infty\,\frac{(m+n-1)\cdots (m+1)m}{m^s}(1-e^{-u})^{m}\notag\\
& =\sum_{j=0}^{n}{n \brack j}e^{-nu}\frac{{\rm Li}_{s-j}(1-e^{-u})}{1-e^{-u}}\notag\\
& =\sum_{m=0}^\infty\,\sum_{j=0}^{n}{n \brack j} B_m^{(s-j)}(n)\frac{u^m}{m!}. \label{eq-3-2}
\end{align}

Secondly, by \eqref{eq-1}, we have
$$\frac{\partial^j}{\partial u^j}\mathcal{G}(u,t)=\frac{1}{1-e^t}\sum_{m=0}^\infty{\rm Li}_{-m-j}(1-e^t)\frac{u^{m}}{m!}.$$
Hence we obtain from \eqref{e-1-6} that
\begin{align*}
\mathcal{G}_n(u,t)
& =\sum_{m=0}^\infty \sum_{j=0}^{n}{n \brack j} e^{nt}\frac{{\rm Li}_{-m-j}(1-e^t)}{1-e^{t}}\frac{u^m}{m!}\\
&=\sum_{m=0}^\infty \sum_{k=0}^\infty \sum_{j=0}^{n}{n \brack j} B_k^{(-m-j)}(n)\frac{(-t)^k}{k!}\frac{u^m}{m!}. 
\end{align*}
Hence, letting $s\to -l$ for $l\in \mathbb{Z}_{\geq 0}$ in \eqref{eq-9}, 
we have
\begin{align}
\mathcal{F}_n(u;-l)&=\lim_{s\to -l}\frac{1}{\Gamma(s)(e^{2\pi i s}-1)}\int_{C_\varepsilon} t^{-l-1}\mathcal{G}_n(u,t)dt \notag\\
&=\sum_{m=0}^\infty\sum_{j=0}^{n}{n \brack j} B_l^{(-m-j)}(n)\,\frac{u^m}{m!}. \label{e-3-3-2}
\end{align}
Comparing the coefficients of \eqref{eq-3-2} with $s=-l$ and \eqref{e-3-3-2}, 
we obtain the proof of Corollary \ref{Cor-1}.

\begin{remark}
As a continuation of the observation stated in \cite[Section 4]{Kaneko-Tsumura2015}, we first found 
the duality formula \eqref{e-2-4} by the method described in this section. 
And then we gave its elementary proof presented in Section \ref{sec-2}.
\end{remark}

\section{A formula relating poly-Bernoulli numbers with Genocchi numbers}\label{sec-4}

In this section, we prove the $C$-type version of the following known result for $B_m^{(-l)}$:

\begin{prop}[\cite{AK-R1999} Proposition]\label{Prop4-1}\ \ For any $n\in \mathbb{Z}_{\geq 1}$, we have
\begin{equation*}
\sum_{l=0}^n (-1)^lB_{n-l}^{(-l)}=0. 
\end{equation*}
\end{prop}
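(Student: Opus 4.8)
The plan is to read off the result from the \emph{ordinary} bivariate generating function \eqref{e-2-3} specialized to $n=0$, since the target sum $\sum_{l=0}^n(-1)^lB_{n-l}^{(-l)}$ carries no factorials and therefore matches the power-series (rather than exponential) generating function. Setting $n=0$ in \eqref{e-2-3} and using $\mathscr{B}_m^{(-l)}(0)=B_m^{(-l)}$ together with $j!\,(j+0)!=(j!)^2$ gives
\[
G(x,y):=\sum_{l=0}^\infty\sum_{m=0}^\infty B_m^{(-l)}x^ly^m=\sum_{j=0}^\infty(j!)^2Q_j(x)Q_j(y).
\]
First I would substitute $y=-x$ and collect powers of $x$: since $G(x,-x)=\sum_{l,m}(-1)^mB_m^{(-l)}x^{l+m}$, the coefficient of $x^n$ equals $(-1)^n\sum_{l=0}^n(-1)^lB_{n-l}^{(-l)}$. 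Hence the proposition is equivalent to showing that $G(x,-x)$ has vanishing coefficients in all positive degrees, i.e.\ $G(x,-x)=1$.

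The main step is therefore to evaluate $\sum_{j\ge0}(j!)^2Q_j(x)Q_j(-x)$. Using $Q_j(X)=X^j/\prod_{i=1}^{j+1}(1-iX)$ and pairing the factors via $(1-iX)(1+iX)=1-i^2X^2$, each summand becomes a function of $u:=x^2$:
\[
T_j:=(j!)^2Q_j(x)Q_j(-x)=(-1)^j(j!)^2\,\frac{u^{\,j}}{\prod_{i=1}^{j+1}(1-i^2u)}.
\]
I expect the crux of the argument to be recognizing that this telescopes. Setting
\[
R_j:=(-1)^j(j!)^2\,\frac{u^{\,j}}{\prod_{i=1}^{j}(1-i^2u)}\qquad(\text{so }R_0=1),
\]
a short computation over the common denominator $\prod_{i=1}^{j+1}(1-i^2u)$ shows $R_j-R_{j+1}=T_j$: writing $c_j=(-1)^j(j!)^2$, the requirement reduces to the single relation $c_{j+1}=-(j+1)^2c_j$, which indeed holds since $-(j+1)^2(-1)^j(j!)^2=(-1)^{j+1}((j+1)!)^2$.

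Summing the telescope gives $\sum_{j=0}^N T_j=R_0-R_{N+1}=1-R_{N+1}$, and since $R_{N+1}=O(u^{N+1})$ as a formal power series, letting $N\to\infty$ yields $G(x,-x)=\sum_{j\ge0}T_j=1$. Comparing the coefficient of $x^n$ for $n\ge1$ then forces $\sum_{l=0}^n(-1)^lB_{n-l}^{(-l)}=0$, as claimed. The only delicate point is guessing the telescoping ``antiderivative'' $R_j$; once its form is posited, the verification is the routine recursion $c_{j+1}=-(j+1)^2c_j$ for $c_j=(-1)^j(j!)^2$. One could instead start from the exponential generating function \eqref{e-2-2}, but extracting the alternating anti-diagonal there is clumsier because of the factorials, so the power-series form \eqref{e-2-3} is the natural tool.
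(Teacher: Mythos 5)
Your proof is correct. Note first that the paper itself offers no proof of this proposition: it is quoted as a known result from \cite{AK-R1999}, so there is no internal argument to compare against. What you have produced is a self-contained proof within this paper's framework, and its route is essentially the $n=0$ instance of the strategy the paper uses for Theorem \ref{Th-main-2}: there the authors likewise specialize \eqref{e-2-3} at $y=-x$, but with $n=1$, and because the resulting series is the (shifted) Genocchi generating function rather than a constant, they must characterize it via the functional equation \eqref{e-4-1}, a uniqueness lemma (Lemma \ref{lemma-unique}), and an inductive identity (Lemma \ref{lemma-key}). In your case the target is the constant $1$, and the telescoping
\[
R_j - R_{j+1} = T_j, \qquad R_j = (-1)^j (j!)^2\,\frac{u^{\,j}}{\prod_{i=1}^{j}(1-i^2u)},
\]
does all the work; the verification indeed reduces to $c_{j+1}=-(j+1)^2c_j$ for $c_j=(-1)^j(j!)^2$, which holds. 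The formal manipulations are all legitimate: \eqref{e-2-3} is an identity in $\mathbb{Q}[[x,y]]$ since the $j$th summand has order $x^jy^j$, so $y\mapsto -x$ is a valid continuous substitution, and $R_{N+1}=O(u^{N+1})$ justifies passing to the limit. Your remark that \eqref{e-2-2} is the wrong tool is also on point: specializing the exponential generating function at $y=-x$ yields the binomial-weighted sums $\sum_{l}\binom{n}{l}(-1)^{n-l}B_{n-l}^{(-l)}$ rather than the plain alternating sum, so the ordinary generating function \eqref{e-2-3} is genuinely needed. Incidentally, your computation gives slightly more than the vanishing statement: it shows that the $n=0$ analogue of the paper's $f(x)$ is identically $1$, which neatly explains why the $B$-version of the alternating sum collapses to $0$ while the $C$-version (the $n=1$ specialization) produces the nontrivial Genocchi generating function of Section \ref{sec-4}.
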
 

If we consider the $C$-version of the left-hand side of this identity, the value is not 0
but turns out to be the Genocchi number.
The Genocchi numbers $\{{G}_{n}\}_{n\geq 0}$ are  defined by the generating series
$$\frac{2t}{e^t+1}=\sum_{n=0}^\infty G_n\frac{t^n}{n!}.$$
(See, for example, Lucas \cite[P.\,250]{Lucas}, also Stanley \cite[Exercise 5.8]{Stanley}). 
Note that the relation with Bernoulli numbers 
$$G_{n} = (2-2^{n+1})B_{n}\quad (n\in \mathbb{Z}_{\geq 0})$$
holds and $G_n$ is an integer for all $n$. The first several values of $G_n$ are 
$$ 0, \,1,\, -1,\, 0,\, 1,\, 0,\, -3,\, 0,\, 17,\, 0,\, -155,\, 0,\, \ldots.$$
The second main result of this paper is the following.

\begin{theorem}\label{Th-main-2}\ \ For any $n\in \mathbb{Z}_{\geq 0}$, we have
\begin{equation}
\sum_{l=0}^{n}(-1)^{l}C_{n-l}^{(-l-1)} = -G_{n+2}.\label{e-4-0}
\end{equation}
\end{theorem}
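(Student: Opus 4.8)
The plan is to deduce \eqref{e-4-0} from the power--series generating function \eqref{e-2-3} specialized to $n=1$. Since $\mathscr{B}_m^{(-l)}(1)=C_m^{(-l-1)}$, equation \eqref{e-2-3} reads $\sum_{l,m\ge 0}C_m^{(-l-1)}x^{l}y^{m}=\sum_{j\ge 0}j!\,(j+1)!\,Q_j(x)Q_j(y)$. The left--hand side of \eqref{e-4-0} is precisely the diagonal sum $\sum_{l+m=n}(-1)^{l}C_m^{(-l-1)}$, so I would put $x=-t$, $y=t$: then $x^{l}y^{m}=(-1)^{l}t^{l+m}$, and the coefficient of $t^{n}$ on the left is exactly the sum we want. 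Writing $H(t):=\sum_{n\ge 0}\bigl(\sum_{l=0}^{n}(-1)^{l}C_{n-l}^{(-l-1)}\bigr)t^{n}$, this gives the closed formula $H(t)=\sum_{j\ge 0}j!\,(j+1)!\,Q_j(-t)Q_j(t)$, an identity of formal power series (only finitely many $j$ contribute to each $t^{n}$, since $Q_j(-t)Q_j(t)$ starts in degree $2j$).

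Next I would simplify each summand. From $Q_j(X)=X^{j}/\prod_{i=1}^{j+1}(1-iX)$ one gets $Q_j(t)Q_j(-t)=(-1)^{j}t^{2j}/\prod_{i=1}^{j+1}(1-i^{2}t^{2})$, so, setting $s=t^{2}$, we have $H(t)=\Psi(s)$ with $\Psi(s):=\sum_{j\ge 0}(-1)^{j}\,j!\,(j+1)!\,s^{j}/\prod_{i=1}^{j+1}(1-i^{2}s)$. Because $\Psi$ contains only even powers of $t$, the odd cases of \eqref{e-4-0} hold automatically (both sides vanish, as $G_{2k+3}=0$), and it remains to prove the coefficient identity $[s^{k}]\Psi(s)=-G_{2k+2}$ for all $k\ge 0$. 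Note that this is a purely formal statement, since $\sum_n G_{n+2}t^{n}$ has radius of convergence $0$; the verification must therefore be carried out at the level of coefficients rather than closed forms.

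To evaluate $\Psi$ I would introduce the auxiliary family $\Psi_r(s):=\frac{1}{1-r^{2}s}\sum_{j\ge 0}\prod_{p=1}^{j}\bigl(-\frac{(r+p-1)(r+p)s}{1-(r+p)^{2}s}\bigr)$ for $r\ge 1$, so that $\Psi_1=\Psi$, and establish the recursion $(1-r^{2}s)\Psi_r(s)=1-r(r+1)s\,\Psi_{r+1}(s)$ by factoring out the $p=1$ term of each product. Passing to coefficients $d_{r,k}:=[s^{k}]\Psi_r(s)$ yields $d_{r,0}=1$ and $d_{r,k}=r^{2}d_{r,k-1}-r(r+1)d_{r+1,k-1}$, giving $d_{r,1}=-r$, $d_{r,2}=r(2r+1)$, $d_{r,3}=-r(6r^{2}+8r+3)$, $\dots$. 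Writing $d_{r,k}=(-1)^{k}r\,g_{r,k}$ for $k\ge 1$ turns this into $g_{x,k}=(x+1)^{2}g_{x+1,k-1}-x^{2}g_{x,k-1}$, i.e. $g_{\cdot,k}=\Delta\bigl(x^{2}g_{x,k-1}\bigr)$ with $g_{x,1}=1$; this is the Gandhi recursion characterizing the Genocchi numbers through $g_{1,k}=|G_{2k+2}|$, whence $d_{1,k}=-G_{2k+2}$, completing the proof.

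The main obstacle is precisely this last identification: the reduction to $\Psi$ and the recursion for $d_{r,k}$ are formal, but showing $d_{1,k}=-G_{2k+2}$ is the real content of the theorem. I expect to handle it either by citing the classical Gandhi--Carlitz characterization of the Genocchi numbers, or self-containedly by verifying that $\bigl(d_{1,k}\bigr)_k$ and $(-G_{2k+2})_k$ obey the same recursion obtained from $2t/(e^{t}+1)=\sum_n G_n t^{n}/n!$. An alternative to the entire second half would be to decompose $\prod_{i=1}^{j+1}(1-i^{2}s)^{-1}=\sum_{i=1}^{j+1}A_{i,j}/(1-i^{2}s)$ into partial fractions, interchange the order of summation over $i$ and $j$, extract $[s^{k}]\Psi$ as a finite sum over $i$, and match it with $G_{2k+2}=(2-2^{2k+3})B_{2k+2}$; this is a more computational but fully elementary route to the same conclusion.
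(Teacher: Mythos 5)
Your proposal is correct, and it takes a genuinely different route from the paper's. Your reductions all check out: specializing \eqref{e-2-3} at $n=1$, $x=-t$, $y=t$ gives $H(t)=\sum_{j\ge0}j!\,(j+1)!\,Q_j(-t)Q_j(t)$; the simplification $Q_j(t)Q_j(-t)=(-1)^jt^{2j}\big/\prod_{i=1}^{j+1}(1-i^2t^2)$ is right, so the odd cases are indeed automatic; the telescoping identity $(1-r^2s)\Psi_r(s)=1-r(r+1)s\,\Psi_{r+1}(s)$ holds by exactly the factoring you describe; and the substitution $d_{r,k}=(-1)^k r\,g_{r,k}$ turns the coefficient recursion into the Gandhi recursion $g_{x,k}=(x+1)^2g_{x+1,k-1}-x^2g_{x,k-1}$, $g_{x,1}=1$, whose specialization $g_{1,k}=|G_{2k+2}|$ combined with $G_{2m}=(-1)^m|G_{2m}|$ yields $d_{1,k}=-G_{2k+2}$ (I verified $1,3,17,155$ for $k=1,\dots,4$). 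The paper proceeds quite differently: it stays with the one-variable series $f_1(x)=\sum_j a_j(x)$, characterizes the Genocchi generating function $g_1(x)=\sum_n(2^{n+1}-2)B_nx^{n+1}$ as the \emph{unique} solution in $x\,\mathbb{Q}[[x]]$ of the functional equation \eqref{e-4-1} (Lemma \ref{lemma-unique}, built on Zagier's Proposition \ref{Zagier}), and then shows by an induction on partial sums (Lemma \ref{lemma-key}) that $xf_1(x)-x^2$ satisfies the same equation. What each approach buys: the paper's argument is self-contained apart from the cited Proposition \ref{Zagier} and a finite, routine induction whose computation is omitted; yours exposes more structure --- the $\Psi_r$ family is essentially the continued-fraction/Gandhi-polynomial structure of the Genocchi numbers --- and is arguably more illuminating, but its crux, $g_{1,k}=|G_{2k+2}|$, is precisely Gandhi's conjecture, proved by Carlitz and by Riordan--Stein, and is not a light step. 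One caution: your fallback ``self-contained'' verification (matching recursions derived from $2t/(e^t+1)$) is not routine, since deriving the Gandhi recursion from the exponential generating function is essentially the content of those papers; so your proof is complete as it stands only if citing the Gandhi--Carlitz theorem is admissible, and otherwise the last step (or your partial-fraction alternative, also only sketched) would need to be written out in full.
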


\begin{remark}
We may write the identity as
$$ \sum_{l=0}^{n}(-1)^{l}C_{n-l}^{(-l)} = G_{n+1},$$
because $C_n^{(0)}=0$ for $n\ge 1$ and $C_0^{(0)}=1$.  However,
because of the duality \eqref{e-1-5-2}, we state and prove the identity
as given in the theorem.
\end{remark}

The rest of this section is devoted to the proof of Theorem \ref{Th-main-2}. 

The generating function of the left-hand side of \eqref{e-4-0}, which we denote by
$f(x)$, is obtained from  \eqref{e-2-3} by specializing $n=1$ and $y=-x$:
\begin{align*}
f(x)
& =  \sum_{m=0}^{\infty}\sum_{l=0}^{\infty}\mathscr{B}_m^{(-l)}(1)x^{m}(-1)^{l}x^{l}  =  \sum_{n=0}^{\infty}\sum_{l=0}^{n}(-1)^{l}C_{n-l}^{(-l-1)}x^{n}.
\end{align*}
Let $g(x)$ be the generating function of the sequence $\{-G_n\}_{n=0}^\infty$:
\begin{equation*}
g(x)  = - \sum_{n=0}^{\infty}G_{n}x^{n}=\sum_{n=0}^{\infty}(2^{n+1}-2)B_{n}x^{n}=-x+x^2-x^4+3x^6-17x^8+\cdots. 
\end{equation*}
Then, our assertion  \eqref{e-4-0} can be rewritten as 
\begin{equation*}
g(x) = x^{2}f(x)-x. 
\end{equation*}
It is convenient for our purpose to make a shift and define
\begin{equation*}
f_1(x)=x^2f(x),\quad g_1(x)=xg(x).  
\end{equation*}
With these, our goal is to prove the identity
\begin{equation*}
g_1(x) = xf_1(x)-x^2. 
\end{equation*}
To show this, we proceed as follows.  We first show that the power series $g_1(x)$ is 
a unique element of $x\,\mathbb{Q}[[x]]$ satisfying the functional equation
\begin{equation}
g_1\left(\frac{x}{1-2x}\right)=g_1(x)+\frac{2x^{3}(x-2)}{(1-x)^{2}},  \label{e-4-1}
\end{equation}
and then show that the right-hand side $xf_1(x)-x^2$ also satisfies the 
same functional equation, thereby proving the theorem by the uniqueness.

The first step is carried out in a similar manner as in the proof of the following proposition of Don Zagier.

\begin{prop}[\cite{AIK2014},\,Proposition A.1 in Appendix]\label{Zagier} \ 
\ The power series 
$$\beta_1(x) = \sum_{n=0}^{\infty}B_{n}x^{n+1}$$
is the unique solution in $x\,\mathbb{Q}[[x]]$ of the equation
\begin{equation}
\beta_{1} \left( \frac{x}{1-x} \right) = \beta_{1}(x) +x^{2}. \label{e-4-2}
\end{equation}
\end{prop}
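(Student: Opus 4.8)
The plan is to treat the functional equation \eqref{e-4-2} as an affine-linear equation in the unknown series and to separate the two assertions, uniqueness and existence. Write $\Phi\colon x\,\mathbb{Q}[[x]]\to x\,\mathbb{Q}[[x]]$ for the $\mathbb{Q}$-linear operator $\Phi(h)(x):=h\!\left(\frac{x}{1-x}\right)-h(x)$; this is well defined because $x/(1-x)=x+x^2+\cdots$ has zero constant term, so the substitution is a legitimate formal operation. Then \eqref{e-4-2} reads $\Phi(\beta_1)=x^2$, and uniqueness amounts to the injectivity of $\Phi$ on $x\,\mathbb{Q}[[x]]$, while existence amounts to checking that the specific series $\sum_{n\ge 0}B_n x^{n+1}$ is sent to $x^2$.

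For the injectivity I would compute $\Phi$ at the level of coefficients. Writing $h(x)=\sum_{k\ge 1}c_k x^k$ and expanding $\bigl(x/(1-x)\bigr)^k=x^k(1-x)^{-k}$ through the negative binomial series $(1-x)^{-k}=\sum_{i\ge 0}\binom{k+i-1}{i}x^i$, one finds that the coefficient of $x^N$ in $\Phi(h)$ equals $\sum_{k=1}^{N-1}c_k\binom{N-1}{k-1}$, where the $k=N$ term $c_N\binom{N-1}{N-1}$ has cancelled against the corresponding term of $-h$. The key structural point is then immediate: if $h$ has order exactly $d\ge 1$ (that is, $c_d\ne 0$ and $c_k=0$ for $k<d$), the lowest possibly nonzero coefficient of $\Phi(h)$ occurs at $x^{d+1}$ and equals $c_d\binom{d}{d-1}=d\,c_d\ne 0$. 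Hence $\Phi$ strictly raises the order, so $\Phi(h)=0$ forces $h=0$, which gives uniqueness at once.

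For existence I would substitute $c_k=B_{k-1}$ (so that $h=\beta_1$) into the coefficient formula above and compare the coefficient of $x^N$ on the two sides of \eqref{e-4-2}. After the $k=N$ term is accounted for, the whole identity reduces to the single family of scalar relations $\sum_{j=0}^{N-2}\binom{N-1}{j}B_j=[\,N=2\,]$, equivalently $\sum_{j=0}^{n-1}\binom{n}{j}B_j=[\,n=1\,]$ with $n=N-1$. This is exactly the classical recurrence for the Bernoulli numbers, which follows by comparing coefficients in $\frac{t e^{t}}{e^{t}-1}=\frac{t}{e^{t}-1}+t$ (equivalently from $(e^{t}-1)\cdot\frac{t}{e^{t}-1}=t$). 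Checking the two low-order cases $N=1,2$ by hand then completes the verification.

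The computation is essentially bookkeeping; the only genuinely substantive points are the order-raising property of $\Phi$, which makes uniqueness automatic and is precisely the feature the paper reuses for $g_1$, and the recognition that the resulting scalar identity is nothing but the defining recurrence for the Bernoulli numbers. The main place to be careful is the binomial manipulation $\binom{N-1}{N-k}=\binom{N-1}{k-1}$ and the correct handling of the inhomogeneous term $x^2$, which influences the comparison only at $N=2$.
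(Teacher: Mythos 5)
Your proposal is correct and follows essentially the same route as the paper's treatment (the cited proof of Zagier and the paper's parallel argument for Lemma \ref{lemma-unique}): expand $h\!\left(\frac{x}{1-x}\right)$ by the negative binomial series, compare coefficients of $x^N$, and observe that the resulting triangular system both forces uniqueness and reduces existence to the classical recurrence $\sum_{j=0}^{m-1}\binom{m}{j}B_j=0$ $(m\geq 2)$ verified by generating functions. Your packaging of uniqueness as the order-raising (hence injectivity) of the operator $\Phi(h)=h\!\left(\frac{x}{1-x}\right)-h(x)$ is just a clean restatement of that same triangularity, so there is nothing further to flag.
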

Since $g_1(x)=\sum_{n=0}^{\infty}(2^{n+1}-2)B_{n}x^{n+1}=\beta_1(2x)-2\beta_1(x)$, the identity \eqref{e-4-1} is easily 
derived from \eqref{e-4-2} by replacing $x$ by $x/(1-x)$ and applying \eqref{e-4-2} 
again.  The proof of the uniqueness, which we state as the lemma below, is postponed
to the end of this section.

\begin{lemma}\label{lemma-unique}
Let
$$
h(x) = \sum_{n=0}^{\infty}d_{n}x^{n+1}\in x\,\mathbb{Q}[[x]]
$$
satisfies \eqref{e-4-1}, i.e., 
\begin{equation}
h\left(\frac{x}{1-2x}\right)=h(x)+\frac{2x^{3}(x-2)}{(1-x)^{2}},  \label{eq-4-10}
\end{equation}
then we have
\begin{equation}
d_{n} = (2^{n+1}-2)B_{n}\quad (n\in \mathbb{Z}_{\geq 0}). \label{eq-4-8-2}
\end{equation}
\end{lemma}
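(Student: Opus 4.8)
The plan is to prove the lemma as a \emph{uniqueness} statement. The text has already observed that $g_1(x)=\sum_{n=0}^\infty (2^{n+1}-2)B_n x^{n+1}=\beta_1(2x)-2\beta_1(x)$ satisfies \eqref{e-4-1}, so it suffices to show that \eqref{eq-4-10} has at most one solution in $x\,\mathbb{Q}[[x]]$; then any such $h$ must coincide with $g_1$, which is exactly \eqref{eq-4-8-2}. To this end I would set $\psi(x)=h(x)-g_1(x)\in x\,\mathbb{Q}[[x]]$ and subtract the two instances of the functional equation. Since the inhomogeneous term $2x^3(x-2)/(1-x)^2$ cancels, $\psi$ satisfies the homogeneous equation
\[ \psi\!\left(\frac{x}{1-2x}\right)=\psi(x). \]
The goal becomes showing that $\psi=0$.

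The key step is a lowest-order comparison. Writing $\phi(x)=x/(1-2x)$, note that $\phi(x)^{n+1}=x^{n+1}(1-2x)^{-(n+1)}=\sum_{k\ge 0}\binom{n+k}{k}2^k x^{n+1+k}$, so in particular $\phi(x)=x+2x^2+O(x^3)$. Suppose, for contradiction, that $\psi\ne 0$, and let $c_N x^{N+1}$ with $c_N\ne0$ be its lowest-order term, so $\psi(x)=\sum_{n\ge N}c_n x^{n+1}$. In $\psi(\phi(x))$ the coefficient of $x^{N+1}$ equals $c_N$ (matching $\psi(x)$), while the coefficient of $x^{N+2}$ receives a contribution $\binom{N+1}{1}2\,c_N=2(N+1)c_N$ from the $n=N$ term together with $c_{N+1}$ from the $n=N+1$ term. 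Comparing with the coefficient $c_{N+1}$ of $x^{N+2}$ in $\psi(x)$ yields $2(N+1)c_N=0$, forcing $c_N=0$ since $N\ge0$---a contradiction. Hence $\psi=0$ and the solution is unique.

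The main subtlety---and the step that requires care---is precisely this off-by-one phenomenon: because $\phi(x)=x+O(x^2)$, the coefficient of $x^{N+1}$ in $\psi(\phi(x))$ reproduces the leading coefficient $c_N$ unchanged, so no information is obtained at that order, and the nontrivial constraint first appears one degree higher, at $x^{N+2}$, where the factor $2(N+1)$ (nonzero for every $N\ge0$) makes $c_N$ vanish. This is the same mechanism that underlies Zagier's Proposition \ref{Zagier}, where $x/(1-x)=x+x^2+\cdots$ plays the role of $\phi$; indeed one could instead argue directly by comparing the coefficient of $x^{m}$ in \eqref{eq-4-10} for each $m$, observing that the $d_{m-1}$-terms cancel and that the resulting linear relation determines $d_{m-2}$ (with nonzero coefficient $2(m-1)$) in terms of $d_0,\dots,d_{m-3}$ and the known right-hand side, again giving uniqueness by induction. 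Either route completes the proof, and combined with the fact that $g_1$ solves \eqref{e-4-1} it establishes \eqref{eq-4-8-2} and thence Theorem \ref{Th-main-2}.
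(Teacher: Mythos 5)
Your proposal is correct, but it divides the labor differently from the paper. You reduce Lemma \ref{lemma-unique} to a pure uniqueness statement: existence of a solution with coefficients $(2^{n+1}-2)B_n$ is imported from the discussion preceding the lemma, where $g_1(x)=\beta_1(2x)-2\beta_1(x)$ is shown to satisfy \eqref{e-4-1} via Zagier's Proposition \ref{Zagier}; uniqueness is then proved by subtracting two solutions and showing that the homogeneous equation $\psi\left(x/(1-2x)\right)=\psi(x)$ has no nonzero solution in $x\,\mathbb{Q}[[x]]$, by your lowest-order-term argument with the nonvanishing factor $2(N+1)$. This argument is sound (composition is legitimate since $x/(1-2x)$ has zero constant term, and your coefficient extraction is right). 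The paper instead works directly with \eqref{eq-4-10}: comparing coefficients yields $d_0=0$ together with the explicit recursion $\sum_{n=0}^{m-1}\binom{m}{n}2^{m-n}d_n=-2m$ for $m\geq 2$, which simultaneously gives uniqueness and reduces the lemma to checking that $d_n=(2^{n+1}-2)B_n$ satisfies this recursion; that check, identity \eqref{eq-4-12}, is carried out with the standard Bernoulli recursion and the expansion of $xe^{2x}/(e^x-1)$. Thus the paper's proof of the lemma is self-contained --- it does not logically rely on Proposition \ref{Zagier}, and in effect re-proves existence --- at the price of a Bernoulli-number computation, whereas yours keeps the lemma computation-free by outsourcing existence to the already-established functional equation for $g_1$. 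Note also that the ``direct'' alternative you sketch at the end --- cancel the $d_{m-1}$-terms and solve for $d_{m-2}$, whose coefficient $2(m-1)$ is nonzero --- is precisely the paper's uniqueness argument, so both routes ultimately exploit the same mechanism you identify: since $x/(1-2x)=x+O(x^2)$, the nontrivial constraint at each order appears one degree higher.
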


Now we are going to prove the series $f_2(x):=xf_1(x)-x^2$ satisfies the same
functional equation
\begin{equation}
f_2\left(\frac{x}{1-2x}\right)=f_2(x)+\frac{2x^{3}(x-2)}{(1-x)^{2}}.  \label{e-4-5}
\end{equation}
By \eqref{e-2-3} ($n=1$ and $y=-x$), we have 
\begin{equation*}
f_{1}(x) = x^{2}f(x)=\sum_{j=0}^{\infty} 
\frac{(-1)^{j}j!\ (j+1)!\ x^{2j+2}}{\prod_{\nu=1}^{j+1}(1-\nu x)(1+\nu x)}. 
\end{equation*}
Let $a_j(x)$ be the $j$th term in the sum on the right, 
$$a_j(x)=\frac{(-1)^{j}j!\ (j+1)!\ x^{2j+2}}{\prod_{\nu=1}^{j+1}(1-\nu x)(1+\nu x)},$$
so that $f_1(x)=\sum_{j=0}^\infty a_j(x)$. 
A simple calculation shows that the functional equation \eqref{e-4-5} is equivalent to the 
functional equation 
\begin{equation*}
f_{1} \left( \frac{x}{1-2x} \right) =(1-2x)f_{1}(x)+\frac{2x^{3}(3-6x+2x^{2})}{(1-x)^{2}(1-2x)} 
\end{equation*}
for $f_1(x)$.  This follows then from the next lemma, because the right-hand side of \eqref{eq-4-7}
is  in $x^{2n+5} \mathbb{Q}[[x]]$ and $n$ can be arbitrary large.

\begin{lemma}\label{lemma-key}\ \ For any $n\in \mathbb{Z}_{\geq 0}$, we have
\begin{align} 
& \sum_{j=0}^{n}\left(a_{j}\left(\frac{x}{1-2x}\right)-(1-2x)a_{j}(x)\right) -\frac{2x^{3}(3-6x+2x^{2})}{(1-x)^{2}(1-2x)}  \notag\\
&= -\frac{2x}{1-x}\cdot\frac{1+(n+2)x}{1-(n+3)x}\cdot \left((n+3)(x-1)^2-(n+2)(2x-1)\right) a_{n+1}(x). \label{eq-4-7}
\end{align}
\end{lemma}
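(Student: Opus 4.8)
The plan is to prove \eqref{eq-4-7} by induction on $n$, the shape of the statement being exactly what one expects from a telescoping argument: both sides are explicit and the right-hand side is packaged in terms of the single term $a_{n+1}(x)$. The two facts that drive the induction are a pair of recurrences for the $a_{j}$. Directly from the definition of $a_{j}(x)$ one reads off the shift recurrence
\[
a_{j+1}(x) = \frac{-(j+1)(j+2)x^{2}}{(1-(j+2)x)(1+(j+2)x)}\,a_{j}(x),
\]
and, substituting $x\mapsto x/(1-2x)$ and regrouping the resulting factors via $1-\nu x/(1-2x)=(1-(\nu+2)x)/(1-2x)$ and $1+\nu x/(1-2x)=(1+(\nu-2)x)/(1-2x)$, the substitution recurrence
\[
a_{j}\!\left(\frac{x}{1-2x}\right) = \frac{(1-2x)(1+jx)(1+(j+1)x)}{(1-(j+2)x)(1-(j+3)x)}\,a_{j}(x).
\]
Both are finite manipulations of the factorial products defining $a_{j}$; in the second, the index shifts in the numerator product collapse after telescoping the $(1-\mu x)$ and $(1+\mu x)$ factors against those in $a_{j}(x)$.

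For the base case $n=0$ I would substitute $a_{0}(x)=x^{2}/(1-x^{2})$ and $a_{1}(x)=-2x^{4}/\{(1-x^{2})(1-4x^{2})\}$ and verify \eqref{eq-4-7} as an identity of rational functions. This is the one place where the $n$-independent term $2x^{3}(3-6x+2x^{2})/\{(1-x)^{2}(1-2x)\}$ must be fully absorbed, since in the inductive step it cancels between consecutive levels.

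For the inductive step, write $R_{n}(x)$ for the right-hand side of \eqref{eq-4-7}. Subtracting the identity at level $n$ from that at level $n+1$, the subtracted $n$-independent term disappears and the left side contributes only its single new summand, so the step reduces to
\[
a_{n+1}\!\left(\frac{x}{1-2x}\right)-(1-2x)\,a_{n+1}(x) = R_{n+1}(x)-R_{n}(x).
\]
The substitution recurrence turns the left side into $(1-2x)a_{n+1}(x)$ times the bracket $\{(1+(n+1)x)(1+(n+2)x)-(1-(n+3)x)(1-(n+4)x)\}/\{(1-(n+3)x)(1-(n+4)x)\}$, whose numerator collapses to $2(2n+5)x(1-x)$. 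On the right, the shift recurrence rewrites the $a_{n+2}(x)$ hidden inside $R_{n+1}$ as a rational multiple of $a_{n+1}(x)$, and the factor $1+(n+3)x$ then cancels, so that $R_{n+1}$ and $R_{n}$ both become explicit rational functions times $a_{n+1}(x)$.

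Placing everything over the common denominator $(1-x)(1-(n+3)x)(1-(n+4)x)$ and dividing out the common factor $2x\,a_{n+1}(x)$, the inductive step becomes the single polynomial identity
\begin{align*}
(2n+5)(1-2x)(1-x)^{2} &= (n+2)(n+3)x^{2}\bigl((n+4)(x-1)^{2}-(n+3)(2x-1)\bigr)\\
&\quad + (1+(n+2)x)\bigl((n+3)(x-1)^{2}-(n+2)(2x-1)\bigr)(1-(n+4)x),
\end{align*}
which I would confirm by expanding both sides as polynomials in $x$ with $n$ as a parameter. A useful sanity check is that the two $x^{4}$ contributions on the right are $\pm(n+2)(n+3)(n+4)$ and so cancel, leaving a cubic on each side as required. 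The main obstacle is entirely this bookkeeping: keeping the shifted factorial products and the two recurrences straight and carrying out the final polynomial verification without sign or index errors. There is no conceptual difficulty beyond organizing the algebra so that the $n$-dependence matches exactly.
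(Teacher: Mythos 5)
Your proposal is correct and follows exactly the approach the paper takes — induction on $n$ — except that the paper declares the calculation ``straightforward'' and omits it entirely, whereas you supply the actual structure: the shift and substitution recurrences for $a_j$, the telescoping reduction of the inductive step to a single difference equation, and the final polynomial identity $(2n+5)(1-2x)(1-x)^2 = (n+2)(n+3)x^2\bigl((n+4)(x-1)^2-(n+3)(2x-1)\bigr) + (1+(n+2)x)\bigl((n+3)(x-1)^2-(n+2)(2x-1)\bigr)(1-(n+4)x)$, all of which I have checked and which are correct (in particular the numerator collapse to $2(2n+5)x(1-x)$ and the cancellation of the factor $1+(n+3)x$ both hold). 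In effect your write-up is a more complete version of the paper's own proof.
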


\begin{proof}\ 
The proof is by induction on $n\geq 0$, and is a straightforward calculation which we omit.
\end{proof}

%

\begin{proof}[Proofs of Lemma \ref{lemma-unique} and Theorem \ref{Th-main-2}]\ 
Because of the binomial expansion
$$
(1-2x)^{-n-1} = \sum_{j=0}^{\infty}{n+j \choose j}2^{j}x^{j},
$$
the left-hand side of \eqref{eq-4-10} is equal to
\begin{align*}
& \sum_{n=0}^{\infty}d_{n}\sum_{j=0}^{\infty}{n+j \choose j}2^{j}x^{n+j+1}=  \sum_{m=0}^{\infty}\left( \sum_{n=0}^{m}{m \choose n}2^{m-n}d_{n}\right) x^{m+1}.
\end{align*}
On the other hand, since
$$\frac{2x^3(x-2)}{(1-x)^2} =-2+2x^2 +\frac{4}{1-x}-\frac{2}{(1-x)^2} =-2 \sum_{m=2}^{\infty} mx^{m+1},$$
the right-hand side of \eqref{eq-4-10} is equal to
$$\sum_{m=0}^{\infty}d_{m}x^{m+1} 
- 2\sum_{m=2}^{\infty}mx^{m+1}.
$$
Comparing the coefficients, we have $d_0=0$ and 
\begin{equation*}
\sum_{n=0}^{m-1}{m \choose n}2^{m-n}d_{n} = -2m\ \ \ \ (m \geq 2). \label{eq-4-11}
\end{equation*}
Therefore, since this recursion (with $d_0=0$)  uniquely determines the 
numbers $d_n\ (n\ge1)$,  we only need to prove 
\begin{equation}
\sum_{n=0}^{m-1}{m \choose n}2^{m-n}(2^{n+1}-2)B_{n} = -2m\ \ \ \ (m \geq 2) \label{eq-4-12}
\end{equation}
in order to establish \eqref{eq-4-8-2}.
By using the standard recursion
$$
\sum_{n=0}^{m-1}{m \choose n}B_{n}=0\ \ \ \ (m \geq 2),
$$
we can rewrite \eqref{eq-4-12} as
\begin{equation*}
\sum_{n=0}^{m}{m \choose n}2^{m-n}B_{n} = m+B_{m}\ \ \ \ (m \geq 2). 
\end{equation*}
This can be easily verified by manipulating the generating function:
\begin{align*}
& \frac{x}{e^x -1}\cdot e^{2x} =  \sum_{m=0}^{\infty} \left( \sum_{n=0}^{m}{m \choose n}2^{m-n}B_{n} \right)\frac{x^m}{m!}
\end{align*}
and 
\begin{equation*}
\frac{x}{e^x -1}\cdot e^{2x} 
 =  x(e^x + 1) + \frac{x}{e^x - 1}  
 =  \sum_{m=1}^{\infty}m \cdot \frac{x^{m}}{m!} + \sum_{m=0}^{\infty}B_{m}\frac{x^m}{m!} +x.
\end{equation*}
This completes the proof of Lemma \ref{lemma-unique}, and thus Theorem \ref{Th-main-2} is proved.
\end{proof}

\bigskip

\ 

\ 

\begin{flushleft}
\begin{small}
{M. Kaneko}: 
{Faculty of Mathematics, 
Kyushu University, 
Motooka 744, Nishi-ku
Fukuoka 819-0395, 
Japan}

e-mail: {mkaneko@math.kyushu-u.ac.jp}

\ 

{F. Sakurai}: 
{Graduate School of Mathematics, 
Kyushu University, 
Motooka 744, Nishi-ku
Fukuoka 819-0395, 
Japan}

e-mail: {f-sakurai@kyudai.jp}

\

{H. Tsumura}: 
{Department of Mathematics and Information Sciences, Tokyo Metropolitan University, 1-1, Minami-Ohsawa, Hachioji, Tokyo 192-0397 Japan}

e-mail: {tsumura@tmu.ac.jp}
\end{small}
\end{flushleft}

\end{document}